\documentclass[a4,11pt]{article}
\usepackage[dvips]{graphicx}
\usepackage{amssymb}
\usepackage{amstext}
\usepackage{amsmath}
\usepackage{amscd}
\usepackage{amsthm}
\usepackage{amsfonts}
\usepackage{enumerate}
\usepackage{graphicx}
\usepackage{latexsym}
\usepackage{mathrsfs}
\usepackage{mathtools}
\usepackage{cases}
\usepackage[svgnames]{xcolor}
\usepackage{verbatim}
\usepackage{tikz}
\newtheorem{thm}{Theorem}[section]

\newtheorem{lem}[thm]{Lemma}

\newtheorem{ass}[thm]{Assumption}
\theoremstyle{definition}

\newtheorem{rem}[thm]{Remark}

\newtheorem*{claim*}{Claim}
\theoremstyle{remark}

\numberwithin{equation}{section}

\title{\vspace{-3cm}\textbf{A modification of the factorization method for scatterers with different physical properties}}

\author{Takashi FURUYA}

\date{}
\begin{document}
\maketitle
\begin{abstract}
We study an inverse acoustic scattering problem by the Factorization Method when the unknown scatterer consists of two objects with different physical properties. Especially, we consider the following two cases: One is the case when each object has the different boundary condition, and the other one is when different penetrability. Our idea here is to modify the far field operator depending on the cases to avoid unnecessary a priori assumptions.
\end{abstract}
\section{Introduction}
Sampling methods are proposed for reconstruction of shape and location in inverse acoustic scattering problems. In the last twenty years, sampling methods such as the Linear Sampling method of Colton and Kress \cite{Colton and Kress}, the Singular Sources Method of Potthast \cite{Potthast}, the Factorization Method of Kirsch \cite{Kirsch}, have been introduced and intensively studied. As an advantage of these sampling methods, the numerical implementation are so simple and fast. However, as disadvantage of sampling methods except the Factorization Method, only sufficient conditions are given for the identification of unknown scatterers. To overcome this drawback, that is, to provide necessary and sufficient conditions, the Factorization Method was introduced and developed by a lot of researchers. 
\par
However, for rigorous justification of the original Factorization Method, we have to assume that the wave number of the incident wave is not an eigenvalue of the Laplacian on an obstacle with respect to the boundary condition of the scattering problem. Kirsch and Liu \cite{Kirsch and Liu} eliminated this problem for the case of a single obstacle by assuming that a small ball is in the interior of the unknown obstacle. They modified the original far field operator by adding the far field operator corresponding to a small ball so that the Factorization Method can be applied to it. On the other hands, in the case of a scatterer consisting of two objects with different physical properties, this problem has been still open. For recent works discussing this case, we refer to \cite{Anagnostopoulos and Charalambopoulos and Kleefeld, Bondarenko and Kirsch and Liu, Kirsch and Kleefeld, Kirsch and Liu3, Yang and Zhang and Zhang}.
\par
In this paper, we study the Factorization Method for a scatterer consisting of two objects with different physical properties. Especially, we consider the following two cases: One is the case when each object has the different boundary condition, and the other one is when different penetrability. For recent works discussing such a scatterer, we refer to \cite{Kirsch and Grinberg2, Kirsch and Liu2, Liu}. We remark that these works have to assume that the wave number of the incident wave is not an eigenvalue of the Laplacian on impenetrable obstacles included in a scatterer. Our aim of this paper is to eliminate this restriction by developing the idea of \cite{Kirsch and Liu}.
\par
We begin with the formulations of the scattering problems. Let $k>0$ be the wave number and for $\theta \in \mathbb{S}^{2}$ be incident direction. Here, $\mathbb{S}^{2}=\{x \in \mathbb{R}^3 : |x|=1 \}$ denotes the unit spherer in $\mathbb{R}^3$. We set
\begin{equation}
u^i(x):=\mathrm{e}^{ik\theta \cdot x}, \ x \in \mathbb{R}^3,\label{1.1}
\end{equation}
where {\it i} in the left hand side stands for {\it incident plane wave}. Let $\Omega \subset \mathbb{R}^3$ be a bounded open set and let its exterior $\mathbb{R}^3\setminus  \overline{\Omega}$ be connected. We assume that $\Omega$ consists of two bounded domains, i.e., $\Omega=\Omega_1 \cup \Omega_2$ such that 
$\overline{\Omega_1} \cap \overline{\Omega_2}=\emptyset$. 
We consider the following two cases.

{\bf The first case. $\Omega_1$ is an impenetrable obstacle with Dirichlet boundary condition, and $\Omega_2$ with Neumann boundary condition.}
Find $u^{s} \in H^{1}_{loc}(\mathbb{R}^3\setminus  \overline{\Omega})$ such that
\begin{equation}
\Delta u^{s}+k^2u^{s}=0 \ \mathrm{in} \ \mathbb{R}^3\setminus  \overline{\Omega}, \label{1.2}
\end{equation}
\begin{equation}
u^{s}=-u^{i} \ \mathrm{on} \ \partial{\Omega_1}, \label{1.3}
\end{equation}
\begin{equation}
\frac{\partial u^{s}}{\partial \nu_{\Omega_2}}=-\frac{\partial u^{i}}{\partial \nu_{\Omega_2}}\ \mathrm{on} \ \partial{\Omega_2}, \label{1.4}
\end{equation}
\begin{equation}
\lim_{r \to \infty}r \biggl( \frac{\partial u^{s}}{\partial  r}-iku^s \biggr)=0, \label{1.5}
\end{equation}
where $r=|x|$, and (\ref{1.5}) is the {\it Sommerfeld radiation condition}. Here, $H^{1}_{loc}(\mathbb{R}^3\setminus  \overline{\Omega})=\{u : \mathbb{R}^3\setminus \overline{\Omega} \to \mathbb{C} : u \bigl|_{B} \in H^{1}(B)\ \mathrm{for\ all\ open\ balls}\ B \}$ denotes the local Sobolov space of one order. $\nu_{\Omega_2}(x)$ denotes the unit normal vector at $x \in \partial \Omega_2$.
We refer to Theorem 7.15 in \cite{McLean} for the well posedness of the problem (\ref{1.2})--(\ref{1.5}), and refer to \cite{Kirsch and Grinberg2} and \cite{Liu} for the factorization method in this case.
\\

{\bf The second case. $\Omega_1$ is a penetrable medium modeled by a contrast function $q \in L^{\infty}(\Omega_1)$} (that is, $\Omega_1=\mathrm{supp}q$), {\bf and $\Omega_2$ is an impenetrable obstacle with Dirichlet boundary condition}.
Find $u^{s} \in H^{1}_{loc}(\mathbb{R}^3\setminus  \overline{\Omega_2})$ such that 
\begin{equation}
\Delta u^{s}+k^2(1+q)u^{s}=-k^2qu^{i} \ \mathrm{in} \ \mathbb{R}^3\setminus  \overline{\Omega_2}, \label{1.6}
\end{equation}
\begin{equation}
u^{s}=-u^{i} \ \mathrm{on} \ \partial{\Omega_2}, \label{1.7}
\end{equation}
\begin{equation}
\lim_{r \to \infty}r \biggl( \frac{\partial u^{s}}{\partial  r}-iku^s \biggr)=0. \label{1.8}
\end{equation}
Note that we extend $q$ by zero outside $\Omega_1$. The well posedness of the problem (\ref{1.6})--(\ref{1.8}) and its factorization method was shown in \cite{Kirsch and Liu2}.
\par
In both cases, it is well known that the scattered wave $u^{s}$ has the following asymptotic behavior:
\begin{equation}
u^s(x,\theta)=\frac{\mathrm{e}^{ik|x|}}{4 \pi |x|}u^{\infty}(\hat{x},\theta)+O\biggl(\frac{1}{|x|^{2}} \biggr), \ |x| \to \infty, \ \ \hat{x}:=\frac{x}{|x|}. \label{1.9}
\end{equation}
The function $u^{\infty}$ is called the far field pattern of $u^s$. With the far field pattern $u^{\infty}$, we define the far field operator $F :L^{2}(\mathbb{S}^{2}) \to L^{2}(\mathbb{S}^{2})$ by
\begin{equation}
Fg(\hat{x}):=\int_{\mathbb{S}^{2}}u^{\infty}(\hat{x},\theta)g(\theta)ds(\theta), \ \hat{x} \in \mathbb{S}^{2}. \label{1.10}
\end{equation}
We write the far field operator of the problem (\ref{1.2})--(\ref{1.5}) as $F=F^{Mix}_{\Omega_1,\Omega_2}$, and (\ref{1.6})--(\ref{1.8}) as $F=F^{Mix}_{\Omega_1q,\Omega_2}$ , respectively.
The inverse scattering problem we consider is to reconstruct  $\Omega$ from the far field pattern $u^{\infty}(\hat{x},\theta)$ for all $\hat{x},\theta \in \mathbb{S}^{2}$. In other words, given the far field operator $F$, reconstruct $\Omega$.
\par
Our contribution in this paper is, in both cases, to give the characterization of $\Omega_1$ without a priori assumptions for the wave number $k>0$. But we have to know the topological properties of $\Omega$. More precisely, an {\it inner} domain $B_1$ of $\Omega_1$ (based on \cite{Kirsch and Liu}), and an {\it outer} domain $B_2$ of $\Omega_2$ (\cite{Kirsch and Grinberg2}), have to be a priori known. Furthermore, we take an {\it additional} domain $B_3$ in the interior of $B_2$. By adding artificial far field operators corresponding to $B_1$, $B_2$, and $B_3$, we modify the original far field operator $F$.
\par
In the first case, we give the following characterization:
\begin{ass}
Let bounded domain $B_1$ and $B_2$ be a prior known. Assume that $\overline{B_1}\subset \Omega_1$, $\overline{\Omega_2}\subset B_2$, $\overline{\Omega_1} \cap \overline{B_2}=\emptyset$.
\end{ass}

\begin{figure}[h]
\centering
\begin{tikzpicture}[scale=0.7]
\draw [very thick](6,2) circle [x radius=2cm, y radius=1.4cm, rotate=0];
\fill [black!20](6,2) circle [x radius=1.2cm, y radius=1cm, rotate=130];
\fill [black!20](1.5,2) circle [x radius=1.4cm, y radius=10mm, rotate=100];
\node (A) at (1.6,1.2) [above] {{\large$\Omega_1$}} ;
\node (B) at (6,1.65) [above] {{\large$\Omega_2$}} ;
\node (C) at (8.3,2.8) [above] {{\large $B_2$}} ;
\node (H) at (5.9,1.1) [above] {\large Neumann} ;
\node (H) at (1.5,0.7) [above] {\large Dirichlet} ;
\draw [very thick](1.5,2.7) circle (0.5);
\node (E) at (1.5,2.3) [above] {{\large $B_1$}} ;
\draw [very thick](7,2.5) circle (0.5);
\node (E) at (7.05,2.1) [above] {{\large $B_3$}} ;
\end{tikzpicture}
\caption{}
\end{figure}

\begin{thm}\label{thm1.2} 
For $\hat{x} \in \mathbb{S}^{2}$, $z \in \mathbb{R}^3$, define
\begin{equation}
\phi_z(\hat{x}):=\mathrm{e}^{-ikz \cdot \hat{x}}. \label{1.11}
\end{equation}
Let $\mathrm{Assumption \ 1.1}$ hold. Take a positive number  $\lambda_0>0$, and a bounded domain $B_3$ with $\overline{B_3} \subset B_2$. $($See $\mathrm{Figure\ 1}$$.)$ Then, for $z \in \mathbb{R}^3 \setminus  \overline{B_2}$ \label{1.12}
\begin{equation}
z \in \Omega_1
\Longleftrightarrow
\sum_{n=1}^{\infty}\frac{|(\phi_z,\varphi_n)_{L^2(\mathbb{S}^{2})}|^2}{\lambda_n} < \infty, \label{1.12}
\end{equation}
where $(\lambda_n,\varphi_n)$ is a complete eigensystem of $F_{\#}$ given by
\begin{equation}
F_{\#}:=\bigl|\mathrm{Re}F\bigr|+\bigl|\mathrm{Im}F\bigr|,\label{1.13}
\end{equation}
where $F:=F^{Mix}_{\Omega_1,\Omega_2}+F^{Dir}_{B_2}+F^{Imp}_{B_1\cup B_3,i\lambda_0}$. Here, $F^{Dir}_{B_2}$ and $F^{Imp}_{B_1\cup B_3,i\lambda_0}$ are the far field operators for the pure Dirichlet boundary condition on $B_2$, and for the pure impedance boundary condition on $B_1 \cup B_3$ with an impedance function $i\lambda_0$, respectively.
\end{thm}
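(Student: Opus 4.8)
The plan is to cast $F$ in the factored form $F=H^{*}TH$ required by the abstract Factorization Method of Kirsch--Grinberg, to verify its hypotheses with no restriction on the wave number $k$, and then to identify $\mathrm{Range}(H^{*})$ and feed it into Picard's criterion. First I would recall the factorizations of the three summands: $F^{Mix}_{\Omega_1,\Omega_2}=H_M^{*}T_MH_M$ from \cite{Liu, Kirsch and Grinberg2}, where $H_M$ sends a density $g$ to the pair $\bigl(v_g|_{\partial\Omega_1},\,\partial_\nu v_g|_{\partial\Omega_2}\bigr)$ of traces of the Herglotz wave $v_g$; the Dirichlet factorization $F^{Dir}_{B_2}=H_{B_2}^{*}T_{B_2}H_{B_2}$; and the impedance factorization $F^{Imp}_{B_1\cup B_3,i\lambda_0}=H_I^{*}T_IH_I$ from \cite{Kirsch and Liu}, with $H_I g=(\partial_\nu+i\lambda_0)v_g|_{\partial(B_1\cup B_3)}$. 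Since Assumption 1.1 together with $\overline{B_3}\subset B_2$ make the four boundaries $\partial\Omega_1,\partial\Omega_2,\partial B_2,\partial(B_1\cup B_3)$ pairwise disjoint, stacking the trace operators into one compact $H\colon L^2(\mathbb{S}^2)\to X$ (with $X$ the product of the four trace spaces) and setting $T:=\mathrm{diag}(T_M,T_{B_2},T_I)$ yields $F=H^{*}TH$, with $H^{*}$ acting by summation, $H^{*}(\xi_M,\xi_{B_2},\xi_I)=H_M^{*}\xi_M+H_{B_2}^{*}\xi_{B_2}+H_I^{*}\xi_I$.

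The heart of the argument is to verify, uniformly in $k>0$, the hypotheses that make the range identity work: $T=T_0+K$ with $K$ compact and $\mathrm{Re}\,T_0$ coercive on $X$ (which holds block by block, the coercive contributions coming from the layer operators on the Dirichlet and Neumann boundaries and from the imaginary-impedance operator); $\mathrm{Im}\langle\psi,T\psi\rangle\ge 0$ for all $\psi\in X$; and the strict positivity $\mathrm{Im}\langle\psi,T\psi\rangle>0$ for $0\neq\psi\in\overline{\mathrm{Range}(H)}$. For the last — the decisive point — I would use that $\mathrm{Im}\,T_M\ge 0$ and $\mathrm{Im}\,T_{B_2}\ge 0$ while, because the impedance $i\lambda_0$ is purely imaginary and hence dissipative, $\mathrm{Im}\,T_I$ is coercive, so that $\mathrm{Im}\langle\psi,T\psi\rangle\gtrsim\|\psi_I\|^2$ for the component $\psi_I$ on $\partial(B_1\cup B_3)$. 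If this vanished for some $\psi=\lim_m Hg_m$ in $\overline{\mathrm{Range}(H)}$, the impedance traces of $v_{g_m}$ on $\partial(B_1\cup B_3)$ would tend to $0$; since the interior impedance problem with impedance $i\lambda_0$ has no real eigenvalues, the limiting field vanishes on $B_1$ (and on $B_3$), and, being a limit of entire Helmholtz solutions, it vanishes identically by unique continuation, forcing $\psi_M=\psi_{B_2}=0$ and hence $\psi=0$, a contradiction. This is exactly where the inner ball $B_1\subset\Omega_1$ removes the Dirichlet-eigenvalue restriction attached to $\Omega_1$, and where $B_3\subset B_2$ does for the known Dirichlet set $B_2$ what cannot be done directly for the a priori unknown Neumann obstacle $\Omega_2$ (which is instead shielded from outside by $B_2$, hence the restriction $z\notin\overline{B_2}$).

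Granting this, the abstract theorem gives $\mathrm{Range}\bigl(F_{\#}^{1/2}\bigr)=\mathrm{Range}(H^{*})$ for $F_{\#}$ as in (\ref{1.13}). Since $H^{*}$ acts by summation, $\mathrm{Range}(H^{*})=\mathrm{Range}(H_M^{*})+\mathrm{Range}(H_{B_2}^{*})+\mathrm{Range}(H_I^{*})$, and each summand coincides with the range of the solution operator of the relevant exterior problem: $\phi_z$ lies in the first iff $z\in\Omega_1\cup\Omega_2$, in the second iff $z\in B_2$, in the third iff $z\in B_1\cup B_3$. As $\overline{\Omega_2},\overline{B_3}\subset B_2$ and $\overline{B_1}\subset\Omega_1$, a sum of one radiating solution from each class is a radiating Helmholtz solution in $\mathbb{R}^3\setminus(\overline{\Omega_1}\cup\overline{B_2})$; combining this with Rellich's lemma and unique continuation shows, for $z\in\mathbb{R}^3\setminus\overline{B_2}$, that $\phi_z\in\mathrm{Range}(H^{*})$ iff $z\in\Omega_1$ — if $z\in\Omega_1$, the fundamental solution $\Phi(\cdot,z)$ has far field pattern $\phi_z$ and its traces are admissible data; if $z\notin\overline{\Omega_1}\cup\overline{B_2}$, a point source at $z$ cannot be represented as such a sum. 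Picard's criterion for the self-adjoint positive operator $F_{\#}$ with eigensystem $(\lambda_n,\varphi_n)$ then converts $\phi_z\in\mathrm{Range}(F_{\#}^{1/2})$ into the convergence of $\sum_n|(\phi_z,\varphi_n)_{L^2(\mathbb{S}^2)}|^2/\lambda_n$, which is precisely (\ref{1.12}).

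The step I expect to be the real obstacle is the uniform, eigenvalue-free verification in the second paragraph: establishing simultaneously the coercive-plus-compact structure of $T$ and the strict positivity of $\mathrm{Im}\langle\cdot,T\cdot\rangle$ on $\overline{\mathrm{Range}(H)}$, i.e. running the Kirsch--Liu dissipative-scatterer mechanism at once for the unknown Dirichlet part $\Omega_1$ (through $B_1$) and for the known set $B_2$ substituted for the unknown Neumann part $\Omega_2$ (through $B_3$), all the while keeping track of the mismatch between the $H^{1/2}$- and $H^{-1/2}$-type trace spaces that the mixed problem forces into $X$.
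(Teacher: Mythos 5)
There is a genuine gap, and it sits exactly at the step you yourself flag as ``the real obstacle'': hypothesis (2) of the range-identity theorem (Theorem 2.4) fails for your stacked middle operator. In the factorizations of the three summands, the principal parts of the middle operators are, up to compact perturbations, $-\mathrm{diag}(S_{\Omega_1,i},N_{\Omega_2,i})$, $-S_{B_2,i}$ and $-N_{B_1\cup B_3,i}$, where $S_{\cdot,i}$ is \emph{positive} coercive and $N_{\cdot,i}$ is \emph{negative} coercive (Lemmas 2.1 and 2.2). Your block-diagonal $T=\mathrm{diag}(T_M,T_{B_2},T_I)$ therefore has principal blocks with signs $(-,+,-,+)$ on the product of the four trace spaces, so there is no single phase $t$ for which $\mathrm{Re}(\mathrm{e}^{it}T)$ is coercive plus compact; ``coercive block by block'' is not enough when the blocks are coercive with opposite signs. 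This is not a technicality: the indefiniteness of the mixed Dirichlet--Neumann middle operator is precisely why \cite{Kirsch and Grinberg2} and \cite{Liu} need the a priori separating domain $B_2$, and why the paper does \emph{not} stack the factorizations. Instead it transfers all three far field operators onto the single solution operator $G^{Dir}_{\Omega_1,B_2}$ of the pure Dirichlet problem on $\Omega_1\cup B_2$ via the operators $R_1,R_2,R_3$ (equations (3.4), (3.8), (3.12)); because $R_1-\mathrm{diag}(I,0)$ is compact (Lemma 3.2(a)), the conjugation $R_1T^{Mix\ *}_{\Omega_1,\Omega_2}R_1^{*}$ annihilates the negative block $N_{\Omega_2,i}$ modulo compact operators, and the surviving principal part $\mathrm{diag}(S_{\Omega_1,i},S_{B_2,i})$ becomes positive coercive after the rotation $\mathrm{e}^{i\pi}$ (Lemma 3.3(a)).

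A second, related soft spot is your range identification. Even granting the range identity, you equate $\mathrm{Range}(H^{*})$ with the sum of the ranges of the three exterior solution operators; for the Herglotz-trace adjoints the identification $\mathrm{Range}(H_{B_2}^{*})=\mathrm{Range}(G^{Dir}_{B_2})$ (and its analogues) requires the corresponding middle operators to be isomorphisms, which is exactly the interior-eigenvalue assumption the theorem is designed to remove. The paper's formulation avoids this entirely because the outer operator of its factorization is already $G^{Dir}_{\Omega_1,B_2}$, whose range is characterized directly and eigenvalue-free by Lemma 3.1(b): for $z\notin\overline{B_2}$, $\phi_z\in\mathrm{Range}(G^{Dir}_{\Omega_1,B_2})$ iff $z\in\Omega_1$. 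Your verification of the strict positivity of $\mathrm{Im}\langle\cdot,T\cdot\rangle$ through the dissipative impedance blocks on $B_1\cup B_3$, and the concluding Picard step, are in the right spirit and correspond to Lemmas 3.2(b) and 3.3(b) and Theorem 2.4; but without repairing the coercivity the argument does not go through as written.
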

Latter, we explain artificial far field operators $F^{Dir}_{B_2}$ and $F^{Imp}_{B_1\cup B_3,i\lambda_0}$ in Section 2, and prove Theorem 1.2 in Section 3.
\par
In the second case, we give the following characterization:
\begin{ass}
Let a bounded domain $B_2$ be a priori known. Assume the following assumptions:
\begin{description}
  \item[(i)] $q \in L^{\infty}(\Omega_1)$ with $\mathrm{Im}q  \geq 0 \ in \ \Omega_1$.
  
  \item[(ii)] $|q|$ is locally bounded below in $\Omega_1$, i.e., for every compact subset $M \subset \Omega_1$, there exists $c>0$ (depend on $M$) such that $|q|\geq c \ \mathrm{in}\ M$. 

  \item[(iii)] $\overline{\Omega_2}\subset B_2$, $\overline{\Omega_1} \cap \overline{B_2}=\emptyset$.
  
   \item[(iv)] There exists $t \in (\pi/2, 3\pi/2)$ and $C>0$ such that
$\mathrm{Re}(\mathrm{e}^{-it}q)$ $\geq$ $C|q|$ a.e. in  $\Omega_1$.
\end{description}
\end{ass}

\begin{figure}[h]
\centering
\begin{tikzpicture}[scale=0.7]
\draw [very thick](6,2) circle [x radius=2.3cm, y radius=1.6cm, rotate=0];
\fill [black!20](6,2) circle [x radius=1.2cm, y radius=1cm, rotate=130];
\fill [black!20](1.5,2) circle [x radius=1.4cm, y radius=10mm, rotate=100];
\node (A) at (1.5,2.3) [above] {{\large$\Omega_1$}} ;
\node (B) at (6,1.8) [above] {{\large$\Omega_2$}} ;
\node (C) at (8.3,2.8) [above] {{\large $B_2$}} ;
\node (H) at (6,1.2) [above] {\large Obstacle} ;
\node (H) at (1.5,0.8) [above] {\large Medium} ;
\draw [very thick](7,2.5) circle (0.5);
\node (E) at (7.05,2.1) [above] {{\large $B_3$}} ;
\end{tikzpicture}
\caption{}
\end{figure}

\begin{thm}
Let $\mathrm{Assumption \ 1.3}$ hold.
Take a positive number $\lambda_0>0$, and a bounded domain $B_3$ with $\overline{B_3} \subset B_2$. $($See $\mathrm{Figure\ 2}$$.)$ Then, for $z \in \mathbb{R}^3 \setminus  \overline{B_2}$ 
\begin{equation}
z \in \Omega_1
\Longleftrightarrow
\sum_{n=1}^{\infty}\frac{|(\phi_z,\varphi_n)_{L^2(\mathbb{S}^{2})}|^2}{\lambda_n} < \infty, \label{1.15}
\end{equation}
where $(\lambda_n,\varphi_n)$ is a complete eigensystem of $F_{\#}$ given by
\begin{equation}
F_{\#}:=\bigl|\mathrm{Re}\bigl(\mathrm{e}^{-it} F\bigr)\bigr|+\bigl|\mathrm{Im}F\bigr|, \label{1.16}
\end{equation}
where $F:=F^{Mix}_{\Omega_1q,\Omega_2}+F^{Dir}_{B_2}+F^{Imp}_{B_3,i\lambda_0}$.
Here, the function $\phi_z$ is given by $($$\ref{1.11}$$)$.
\end{thm}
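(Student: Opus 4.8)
The plan is to run the Factorization Method in the non-selfadjoint form adapted to $F_{\#}=\bigl|\mathrm{Re}(\mathrm{e}^{-it}F)\bigr|+\bigl|\mathrm{Im}F\bigr|$, along the same three-step scheme used for Theorem \ref{thm1.2}, but with the volume (contrast) factorization of \cite{Kirsch and Liu2} taking the place of the Dirichlet factorization of $\Omega_{1}$. Step one is to produce a factorization $F=\mathcal H^{*}\mathcal T\mathcal H$ with $\mathcal H\colon L^{2}(\mathbb S^{2})\to X$ compact, injective, with dense range, $X$ being the product of $L^{2}(\Omega_{1})$ with trace spaces attached to $\partial\Omega_{2}$, $\partial B_{2}$ and $\partial B_{3}$. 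Step two is to check that the middle operator $\mathcal T$ satisfies the sign and coercivity hypotheses of the abstract range identity, so that $\mathrm{Range}\,(F_{\#}^{1/2})=\mathrm{Range}\,\mathcal H^{*}$. Step three is to identify $\mathrm{Range}\,\mathcal H^{*}$ geometrically and to apply Picard's criterion, turning ``$\phi_{z}\in\mathrm{Range}\,\mathcal H^{*}$'' into the series in (\ref{1.15}).

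For step one I would glue together the three available factorizations: that of $F^{Mix}_{\Omega_{1}q,\Omega_{2}}$ from \cite{Kirsch and Liu2}, which factors through $L^{2}(\Omega_{1})$ (the contrast, via the volume potential) together with a trace space on $\partial\Omega_{2}$ (the Dirichlet part of the mixed problem), and those of the artificial operators $F^{Dir}_{B_{2}}$ and $F^{Imp}_{B_{3},i\lambda_{0}}$ set up in Section 2, which factor through trace spaces on $\partial B_{2}$ and $\partial B_{3}$. Writing $v_{g}(x)=\int_{\mathbb S^{2}}\mathrm{e}^{ikx\cdot\theta}g(\theta)\,ds(\theta)$ for the Herglotz wave with kernel $g$, one assembles the associated Herglotz-type operators $g\mapsto\bigl(v_{g}|_{\Omega_{1}},\ v_{g}|_{\partial\Omega_{2}},\ v_{g}|_{\partial B_{2}},\ v_{g}|_{\partial B_{3}}\bigr)$ into the column $\mathcal H$ and the corresponding operator matrix into $\mathcal T$, obtaining $F=\mathcal H^{*}\mathcal T\mathcal H$; here the separation $\overline{\Omega_{1}}\cap\overline{B_{2}}=\emptyset$ of Assumption 1.3(iii) and the fact that we will only test points $z\in\mathbb R^{3}\setminus\overline{B_{2}}$ are used to route the impenetrable contribution sitting inside $B_{2}$ to data on $\partial B_{2}$, in the spirit of the outer-domain device of \cite{Kirsch and Grinberg2}. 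Injectivity of $\mathcal H$ is where \emph{no} eigenvalue hypothesis is needed: if $\mathcal H g=0$ then $v_{g}\equiv0$ on the nonempty open set $\Omega_{1}=\mathrm{supp}\,q$, hence $v_{g}\equiv0$ on $\mathbb R^{3}$ by real analyticity of Herglotz waves, hence $g=0$; compactness of $\mathcal H$ and density of its range are standard.

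Step two is the heart of the proof. The inequality $\mathrm{Im}\langle\mathcal T\psi,\psi\rangle\ge0$ comes from absorption: $\mathrm{Im}\,q\ge0$ in $\Omega_{1}$ (Assumption 1.3(i)) for the contrast block, $\lambda_{0}>0$ for the impedance block on $B_{3}$, and nonnegativity of the imaginary part of the Dirichlet block on $B_{2}$. To write $\mathrm{Re}(\mathrm{e}^{-it}\mathcal T)$ as a coercive operator plus a compact one, one uses on the $L^{2}(\Omega_{1})$ block Kirsch's estimate for the medium problem --- this is exactly where Assumption 1.3(ii) ($|q|$ locally bounded below) and 1.3(iv) ($\mathrm{Re}(\mathrm{e}^{-it}q)\ge C|q|$) enter --- while on the $\partial B_{2}$ and $\partial B_{3}$ blocks the single-layer operators are coercive up to compact perturbations. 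The decisive point, namely the injectivity that in \cite{Kirsch and Liu2} forced ``$k^{2}$ not a Dirichlet eigenvalue of $\Omega_{2}$'', is recovered from the added impedance ball $B_{3}\subset B_{2}$: if $\mathrm{Im}\langle\mathcal T\psi,\psi\rangle=0$, then the $B_{3}$ component yields a radiating solution with vanishing Cauchy data on $\partial B_{3}$, hence identically zero, and propagating this vanishing through the $\partial B_{2}$ and $L^{2}(\Omega_{1})$ components by unique continuation (again using $\overline{\Omega_{1}}\cap\overline{B_{2}}=\emptyset$) forces $\psi=0$. This transplant of the mechanism of \cite{Kirsch and Liu} into the present product setting is the step I expect to be the main obstacle, because the middle operator now couples a volume block with several boundary blocks and one must keep the rotated coercivity from Assumption 1.3(iv) and the eigenvalue-free injectivity from $B_{3}$ simultaneously available within one operator matrix. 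Granting steps one and two, the abstract range identity (Section 2; cf. \cite{Kirsch and Liu2, Kirsch and Liu}) yields $\mathrm{Range}\,(F_{\#}^{1/2})=\mathrm{Range}\,\mathcal H^{*}$.

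Finally, $\mathrm{Range}\,\mathcal H^{*}$ is the sum of the ranges of the adjoints of the four component operators, and each is characterized classically: $\phi_{z}$ lies in the contrast component's range iff $z\in\Omega_{1}$ (cf. \cite{Kirsch and Liu2}), and in the impenetrable components' ranges iff $z\in\Omega_{2}$, $z\in B_{2}$, $z\in B_{3}$, respectively. A Rellich and unique-continuation argument of the type used in \cite{Kirsch and Grinberg2, Liu} upgrades this to $\phi_{z}\in\mathrm{Range}\,\mathcal H^{*}\iff z\in\Omega_{1}\cup\Omega_{2}\cup B_{2}\cup B_{3}=\Omega_{1}\cup B_{2}$, the last identity because $\overline{\Omega_{2}}\cup\overline{B_{3}}\subset B_{2}$. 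Restricting to $z\in\mathbb R^{3}\setminus\overline{B_{2}}$ reduces the right-hand side to $z\in\Omega_{1}$, and Picard's criterion applied to the positive operator $F_{\#}^{1/2}$ with eigensystem $(\lambda_{n},\varphi_{n})$ turns $\phi_{z}\in\mathrm{Range}\,(F_{\#}^{1/2})$ into convergence of $\sum_{n}|(\phi_{z},\varphi_{n})_{L^{2}(\mathbb S^{2})}|^{2}/\lambda_{n}$, which is (\ref{1.15}).
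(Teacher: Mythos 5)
Your overall architecture (factorize, verify the hypotheses of the abstract range identity, identify the range, apply Picard) matches the paper's, and your steps one and three are essentially sound. The genuine gap is in step two, and it is created by your choice of factorization. You glue the three factorizations block-diagonally over a four-component product space, so your middle operator $\mathcal T$ carries the impedance middle operator $-T^{Imp\,*}_{B_3,i\lambda_0}$ as a genuine diagonal block. By Lemma 2.1(b) its real part is $-N_{B_3,i}$ plus compact, i.e.\ \emph{positive} coercive plus compact; after the rotation by $\mathrm{e}^{it}$ with $t\in(\pi/2,3\pi/2)$ (which is forced on you by Assumption 1.3(iv) and by the need to make the single-layer blocks $(-\cos t)\,S_{\cdot,i}$ positive) this block becomes $\cos t\,(-N_{B_3,i})$ plus compact, which is \emph{negative} coercive. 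Hence $\mathrm{Re}(\mathrm{e}^{it}\mathcal T)$ cannot be written as ``positive coercive plus compact'' and hypothesis (2) of Theorem 2.4 fails for your $\mathcal T$ (the same objection applies to the $\partial\Omega_2$ block if it were routed differently). The paper avoids this precisely by \emph{not} putting $B_3$ or $\partial\Omega_2$ on the diagonal: all three far field operators are rewritten as $G^{Mix}_{\Omega_10,B_2}(\cdot)G^{Mix\,*}_{\Omega_10,B_2}$ over the two-component space $L^{2}(\Omega_1)\times H^{1/2}(\partial B_2)$ by means of transfer operators $R_1,R_2,R_3$, and since $R_3$ is compact (Rellich, the boundaries being disjoint) the entire $B_3$ contribution $R_2R_3T^{Imp\,*}_{B_3,i\lambda_0}R_3^*R_2^*$ is a compact perturbation; the coercive part then comes only from the $\mathrm{Re}(\mathrm{e}^{it}|q|/(k^2q))$ and $(-\cos t)S_{B_2,i}$ blocks.

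The second gap is your injectivity argument. For a block-diagonal $\mathcal T$ the components of a kernel element are completely decoupled, so ``the $B_3$ component vanishes, hence propagate by unique continuation'' proves nothing about the $L^{2}(\Omega_1)$ and $\partial B_2$ components; that propagation is only available on $\mathrm{Ran}(\mathcal H)$, where the components are traces of one Herglotz wave, and hypothesis (5) of Theorem 2.4 would then require the strict sign on the \emph{closure} of that range, which you do not control. The paper instead verifies hypothesis (4), outright injectivity of the middle operator $M^{*}$, in two stages: strict positivity of $\mathrm{Im}\,T^{Imp}_{B_3,i\lambda_0}$ together with injectivity of $R_3^{*}$ kills the $\partial B_2$ component, and then a Green's identity/duality computation (equations (4.25)--(4.34)), reducing to uniqueness of the exterior Dirichlet problem in $\mathbb R^{3}\setminus\overline{\Omega_2}$, kills the $L^{2}(\Omega_1)$ component. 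This last step, which is exactly where $\mathrm{Im}\,q\ge 0$ alone does not give strict positivity of the imaginary part, has no counterpart in your sketch.
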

We prove Theorem 1.4 in Section. We can also give the characterization by replacing (iv) in Assumption 1.3 with
\begin{description}
\item[(iv')] {\it There exists $t \in [0, \pi/2) \cup (3\pi/2, 2 \pi]$ and $C>0$ such that
$\mathrm{Re}(\mathrm{e}^{-it}q)$ $\geq$ $C|q|$ a.e. in  $\Omega_1$.}
\end{description} 
For details, see Assumption 4.5 and Theorem 4.6.
\par
Let us compare our works (Theorems 1.2 and 1.4) with previous works from the mathematical point of view of a priori assumptions. For Theorem 1.2 we refer to Theorem 2.5 of \cite{Liu}, and for Theorems 1.4 we refer to Theorem 3.9 (b) of \cite{Kirsch and Liu2}. These previous works also gave the characterization of $\Omega_1$ by assuming the existence of outer domain $B_2$ of $\Omega_2$ and that the wave number $k^2$ is not an eigenvalue on an obstacle, while, in our work we can choose arbitrary wave number $k>0$ by introducing extra artificial domains such as $B_1$, $B_2$, and $B_3$, which are not so difficult topological assumptions. 
\par
This paper is organized as follows. In Section 2, we recall a factorization of the far field operator and its  properties. In Section 3 and Section 4, we prove Theorems 1.2 and 1.4, respectively.
\section{A factorization for the far field operator}
In Section 2, we briefly recall a factorization for the far field operators and its properties. 
\par

First, we consider a factorization of the far field operator for the pure boundary condition. Let $B$ be a bounded open set and let $\mathbb{R}^3 \setminus  \overline{B}$ be connected. Later, we will use the result of this section by regarding $B$ as auxiliary domains, like $B_1$, $B_2$, and $B_3$ in Theorems 1.2 and 1.4. We define $G^{Dir}_{B}:H^{1/2}(\partial B) \to L^{2}(\mathbb{S}^{2})$ by
\begin{equation}
G^{Dir}_{B}f:=v^{\infty},
\label{2.13}
\end{equation}
where $v^{\infty}$ is the far field pattern of a radiating solution $v$ (that is, $v$ satisfies the Sommerfeld radiation condition) such that   
\begin{equation}
\Delta v+k^2v=0 \ \mathrm{in} \ \mathbb{R}^3\setminus  \overline{B}, \label{2.14}
\end{equation}
\begin{equation}
v=f \ \mathrm{on} \ \partial{B}. \label{2.15}
\end{equation}
Let $\lambda_0>0$. We also define $G^{Imp}_{B,i\lambda_{0}}:H^{-1/2}(\partial B) \to L^{2}(\mathbb{S}^{2})$ in the same way as $G^{Dir}_{B}$ by replacing (\ref{2.15}) with
\begin{equation}
\frac{\partial v}{\partial \nu_{B}}+i\lambda_{0}v=f \ \mathrm{on} \ \partial{B}. \label{2.16}
\end{equation}
We define the boundary integral operators $S_{B}:H^{-1/2}(\partial B) \to H^{1/2}(\partial B)$ and $N_{B}:H^{1/2}(\partial B) \to H^{-1/2}(\partial B)$ by
\begin{equation}
S_{B}\varphi(x):=\int_{\partial B} \varphi(y)\Phi(x,y)ds(y), \ x \in \partial B, \label{2.4}
\end{equation}
\begin{equation}
N_{B}\psi(x):=\frac{\partial}{\partial \nu_{B}(x)}\int_{\partial B} \psi(y)\frac{\partial\Phi(x,y)}{\partial \nu_{B}(y)}ds(y), \ x \in \partial B, \label{2.5}
\end{equation}
where
$\Phi(x,y):= \displaystyle \frac{\mathrm{e}^{ik|x-y|}}{4 \pi |x-y|}$. We also define $S_{B,i}$ and $N_{B,i}$ by the boundary integral operators ($\ref{2.4}$) and ($\ref{2.5}$), respectively, corresponding to the wave number $k=i$. It is well known that $S_{B,i}$ is self-adjoint and  positive coercive, and $N_{B,i}$ is self-adjoint and negative coercive. For details of the boundary integral operators, we refer to \cite{Kirsch and Grinberg} and \cite{McLean}.
\par
The following properties of far field operators $F^{Dir}_{B}$ and $F^{Imp}_{B,i\lambda_0}$ are given by previous works in \cite{Kirsch and Grinberg} and \cite{Kirsch and Liu}:

\begin{lem}[Lemma 1.14 in \cite{Kirsch and Grinberg}, Theorem 2.1 and Lemma 2.2 in \cite{Kirsch and Liu}]\ \ \ 
\vspace{-0.2cm}
\begin{description}
\item[(a)] The far field operators $F^{Dir}_{B}$ and $F^{Imp}_{B,i\lambda_0}$ have a factorization of the form 
\begin{equation}
F^{Dir}_{B}=-G^{Dir}_{B}S^{*}_{B}G^{Dir\ *}_{B},\ \ \ \ \ F^{Imp}_{B,i\lambda_{0}}=-G^{Imp}_{B,i\lambda_{0}}T^{Imp\ *}_{B,i\lambda_0}G^{Imp\ *}_{B,i\lambda_{0}}. \label{2.17}
\end{equation}

\item[(b)] The operators $S_{B}:H^{-1/2}(\partial B) \to H^{1/2}(\partial B)$ and $T^{Imp}_{B,i\lambda_0}:H^{1/2}(\partial B) \to H^{-1/2}(\partial B)$ is of the form
\begin{equation}
S_{B}=S_{B,i}+K,\ \ \ \ T^{Imp}_{B,i\lambda_0}=N_{B,i}+K',\label{2.18}
\end{equation}
where $K$ and $K'$ are some compact operators.

\item[(c)] 
$\mathrm{Im} \langle \varphi, S_{B} \varphi\rangle \leq 0$ \ for \ all \ $\varphi \in H^{-1/2}(\partial B)$. Furthermore, if we assume that $k^2$ is not a Dirichlet eigenvalue of $-\Delta$ in $B$, then $\mathrm{Im} \langle \varphi, S_{B} \varphi\rangle < 0$ \ for \ all \ $\varphi \in H^{-1/2}(\partial B)$ with $\varphi \neq 0$.

\item[(d)] 
$\mathrm{Im} \langle T^{Imp}_{B,i\lambda_{0}} \varphi, \varphi\rangle > 0$ \ for \ all \ $\varphi \in H^{1/2}(\partial B)$ with $\varphi \neq 0$.

\end{description}
\end{lem}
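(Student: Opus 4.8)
The plan is to obtain all four assertions from the single-layer representation of the radiating solutions defining $G^{Dir}_{B}$ and $G^{Imp}_{B,i\lambda_0}$, combined with the jump relations for layer potentials and Green's identities. Throughout I use the incident Herglotz superposition $v_g(x)=\int_{\mathbb{S}^2}\mathrm{e}^{ikx\cdot\theta}g(\theta)\,ds(\theta)$ and introduce the Herglotz trace operators $H^{Dir}_{B}g:=v_g|_{\partial B}$ and $H^{Imp}_{B}g:=(\partial_{\nu_B}v_g+i\lambda_0 v_g)|_{\partial B}$. Since the pure Dirichlet problem forces $u^s=-u^i$ on $\partial B$ (and the impedance problem forces the corresponding impedance trace of $-u^i$), the far field operators split as $F^{Dir}_{B}=-G^{Dir}_{B}H^{Dir}_{B}$ and $F^{Imp}_{B,i\lambda_0}=-G^{Imp}_{B,i\lambda_0}H^{Imp}_{B}$.

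For part (a) the essential step is the reciprocity identity $H^{Dir}_{B}=S^{*}_{B}G^{Dir\,*}_{B}$ (and its impedance analogue with $T^{Imp}_{B,i\lambda_0}$ in place of $S_B$). To prove it I represent the solution behind $G^{Dir}_{B}f$ as a single layer $v(x)=\int_{\partial B}\phi(y)\Phi(x,y)\,ds(y)$, so that $S_B\phi=f$ and the far field pattern is $v^\infty(\hat x)=\int_{\partial B}\phi(y)\mathrm{e}^{-ik\hat x\cdot y}\,ds(y)$. Forming the $L^2(\mathbb{S}^2)$-inner product $\langle G^{Dir}_{B}f,g\rangle$, interchanging the order of integration, and recognizing that $\int_{\mathbb{S}^2}\mathrm{e}^{-ik\hat x\cdot y}\overline{g(\hat x)}\,ds(\hat x)=\overline{v_g(y)}$ for $y\in\partial B$ yields $\langle G^{Dir}_{B}f,g\rangle=\langle S_B^{-1}f,H^{Dir}_{B}g\rangle$, whence $G^{Dir\,*}_{B}=(S_B^*)^{-1}H^{Dir}_{B}$, i.e.\ $H^{Dir}_{B}=S^{*}_{B}G^{Dir\,*}_{B}$. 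Substituting into $F^{Dir}_{B}=-G^{Dir}_{B}H^{Dir}_{B}$ gives the first factorization in $(\ref{2.17})$; the impedance case is the same computation with the single layer replaced by the ansatz adapted to $(\ref{2.16})$ and $S_B$ replaced by $T^{Imp}_{B,i\lambda_0}$.

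Part (b) is a kernel-regularity statement: the difference $\Phi(x,y)-\Phi_i(x,y)=\tfrac{1}{4\pi|x-y|}(\mathrm{e}^{ik|x-y|}-\mathrm{e}^{-|x-y|})$ has a removable singularity at $x=y$ and is analytic there, so $S_B-S_{B,i}$, and after the same cancellation in the hypersingular kernels $T^{Imp}_{B,i\lambda_0}-N_{B,i}$, map into a smoother space and are compact, the residual $i\lambda_0$ impedance terms being of lower order and also compact. For parts (c) and (d) I attach to $\varphi$ the single-layer potential $u$ with density $\varphi$, use continuity of its trace and the normal-derivative jump $\varphi=\partial_{\nu_B}u_{-}-\partial_{\nu_B}u_{+}$, and apply Green's first identity both in $B$ and in the region between $\partial B$ and a large sphere $|x|=R$. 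Letting $R\to\infty$, the radiation condition turns the sphere-at-infinity term into a positive multiple of $\|u^\infty\|^2_{L^2(\mathbb{S}^2)}$ while the interior contribution stays real, giving $\mathrm{Im}\langle\varphi,S_B\varphi\rangle=-c\,\|u^\infty\|^2\le 0$ with $c>0$, which is (c). If $\mathrm{Im}\langle\varphi,S_B\varphi\rangle=0$ then $u^\infty=0$, so $u=0$ in the connected exterior by Rellich's lemma and unique continuation; continuity of the single-layer trace makes $u$ a solution of the interior Dirichlet problem with zero data, which vanishes precisely when $k^2$ is not a Dirichlet eigenvalue, and then the jump relation forces $\varphi=0$, yielding the strict inequality. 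For (d) the identical computation retains the impedance surface term $\lambda_0\int_{\partial B}|u|^2\,ds$ produced by the $i\lambda_0$ condition; since $\lambda_0>0$ this term is strictly positive and adds to the radiation term, so $\mathrm{Im}\langle T^{Imp}_{B,i\lambda_0}\varphi,\varphi\rangle>0$ for every $\varphi\neq 0$ with no spectral hypothesis, vanishing being impossible as it would force both the far field and the boundary trace to be zero.

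The main obstacle is the reciprocity identity in part (a): it requires careful tracking of complex conjugation between the plane-wave kernel $\mathrm{e}^{ikx\cdot\theta}$ and the far-field kernel $\mathrm{e}^{-ik\hat x\cdot y}$, together with correct use of the sesquilinear dual pairings between $H^{1/2}(\partial B)$ and $H^{-1/2}(\partial B)$ and the invertibility (or, more robustly, the Fredholm surjectivity) of $S_B$. Once (a) is in place, (b) is routine potential theory and (c)--(d) are sign bookkeeping in Green's identity, the only conceptual subtlety being that the dissipative impedance $i\lambda_0$ supplies in (d) the definiteness that the Dirichlet case (c) attains only under the eigenvalue assumption.
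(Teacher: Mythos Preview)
The paper does not prove this lemma at all; it merely cites Lemma~1.14 of \cite{Kirsch and Grinberg} and Theorem~2.1, Lemma~2.2 of \cite{Kirsch and Liu}. So there is no proof in the paper to compare against, and I assess your argument on its own merits. Your sketches of (b), (c) and (d) are the standard potential-theoretic computations and are fine as outlines.

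Your argument for (a), however, has a genuine gap. You represent the solution behind $G^{Dir}_{B}f$ as a single-layer potential with density $\phi$ satisfying $S_B\phi=f$, and then write $\langle G^{Dir}_{B}f,g\rangle=\langle S_B^{-1}f,H^{Dir}_{B}g\rangle$. Both steps presuppose that $S_B$ is bijective, which fails precisely when $k^2$ is an interior Dirichlet eigenvalue of $-\Delta$ in $B$ --- exactly the situation the whole paper is built to cover. Your parenthetical ``or, more robustly, the Fredholm surjectivity'' does not repair this: $S_B$ is Fredholm of index zero, so loss of injectivity at an eigenvalue is loss of surjectivity as well, and the single-layer ansatz simply does not reach every $f\in H^{1/2}(\partial B)$.

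The standard fix avoids inverting $S_B$ altogether. Compute the adjoint of the Herglotz trace directly: for $\varphi\in H^{-1/2}(\partial B)$ one has $(H^{Dir}_{B})^{*}\varphi(\hat x)=\int_{\partial B}\varphi(y)\,\mathrm{e}^{-ik\hat x\cdot y}\,ds(y)$, which is exactly the far-field pattern of the single-layer potential with density $\varphi$. Since that potential has boundary trace $S_B\varphi$, this says $G^{Dir}_{B}\,S_B=(H^{Dir}_{B})^{*}$ as operators $H^{-1/2}(\partial B)\to L^2(\mathbb{S}^2)$; taking adjoints gives $S_B^{*}G^{Dir\,*}_{B}=H^{Dir}_{B}$, and substituting into $F^{Dir}_{B}=-G^{Dir}_{B}H^{Dir}_{B}$ yields (\ref{2.17}) with no invertibility hypothesis. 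For the impedance factorization you should also be explicit: in \cite{Kirsch and Liu} the operator $T^{Imp}_{B,i\lambda_0}$ arises from a specific combined-layer representation, not from a pure single layer, so ``the same computation with $S_B$ replaced by $T^{Imp}_{B,i\lambda_0}$'' needs the ansatz and the definition of $T^{Imp}_{B,i\lambda_0}$ spelled out before the analogous identity $G^{Imp}_{B,i\lambda_0}T^{Imp}_{B,i\lambda_0}=(H^{Imp}_{B})^{*}$ can be read off.
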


Secondly, we consider the far field operator $F^{Mix}_{\Omega_1,\Omega_2}$ for the problem (\ref{1.2})--(\ref{1.5}). Recall that $\Omega=\Omega_1 \cup \Omega_2$, and $\Omega_1$ is an impenetrable obstacle with Dirichlet boundary condition, and $\Omega_2$ with Neumann boundary condition. We define $G^{Mix}_{\Omega_1,\Omega_2}:H^{1/2}(\partial \Omega_1) \times H^{-1/2}(\partial \Omega_2) \to L^{2}(\mathbb{S}^{2})$ by
\begin{equation}
G^{Mix}_{\Omega_1,\Omega_2}
\left(
    \begin{array}{cc}
      f \\
      g 
    \end{array}
  \right)
  :=v^{\infty},
\label{2.1}
\end{equation}
where $v^{\infty}$ is the far field pattern of a radiating solution $v$ such that   
\begin{equation}
\Delta v+k^2v=0 \ \mathrm{in} \ \mathbb{R}^3\setminus  \overline{\Omega}, \label{2.2}
\end{equation}
\begin{equation}
v=f \ \mathrm{on} \ \partial{\Omega_1}, \ \ \ \ \ \frac{\partial v}{\partial \nu_{\Omega_2}}=g \ \mathrm{on} \ \partial{\Omega_2}. \label{2.3}
\end{equation}
The following properties of $F^{Mix}_{\Omega_1,\Omega_2}$ are given by previous works in \cite{Kirsch and Grinberg}:

\begin{lem}[Theorem 3.4 in \cite{Kirsch and Grinberg}]
\begin{description}

\item[(a)] The far field operator $F^{Mix}_{\Omega_1,\Omega_2}$ has a factorization of the form 
\begin{equation}
F^{Mix}_{\Omega_1,\Omega_2}=-G^{Mix}_{\Omega_1,\Omega_2}T^{Mix\ *}_{\Omega_1,\Omega_2}G^{Mix\ *}_{\Omega_1,\Omega_2}. \label{2.6}
\end{equation}
\item[(b)] The middle operator $T^{Mix}_{\Omega_1,\Omega_2}: H^{-1/2}(\partial \Omega_1) \times H^{1/2}(\partial \Omega_2) \to H^{1/2}(\partial \Omega_1) \times H^{-1/2}(\partial \Omega_2)$ is of the form
\begin{equation}
T^{Mix}_{\Omega_1,\Omega_2}=
\left(
    \begin{array}{cc}
      S_{\Omega_1,i} & 0 \\
      0 & N_{\Omega_2,i}
    \end{array}
  \right)
+K, \label{2.7}
\end{equation}
where $K$ is some compact operator.

\item[(c)] 
$\mathrm{Im} \langle T^{Mix}_{\Omega_1,\Omega_2} \varphi, \varphi\rangle \geq 0$ \ for \ all \ $\varphi \in H^{-1/2}(\partial \Omega_1) \times H^{1/2}(\partial \Omega_2)$.
\end{description}
\end{lem}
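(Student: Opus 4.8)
The plan is to reduce Theorem~1.4 to the abstract range identity for non-self-adjoint far field operators (cf.\ \cite{Kirsch and Grinberg}) combined with Picard's criterion, after writing the modified operator $F=F^{Mix}_{\Omega_1q,\Omega_2}+F^{Dir}_{B_2}+F^{Imp}_{B_3,i\lambda_0}$ as a single factorization. Using the factorizations of Lemma~2.1 for the two auxiliary terms and the (\cite{Kirsch and Liu2}) factorization of $F^{Mix}_{\Omega_1q,\Omega_2}$, I would assemble the block row operator and the block-diagonal middle operator
\[
G:=\bigl(G^{Mix}_{\Omega_1q,\Omega_2},\ G^{Dir}_{B_2},\ G^{Imp}_{B_3,i\lambda_0}\bigr),\qquad
T:=\mathrm{diag}\bigl(T^{Mix}_{\Omega_1q,\Omega_2},\,S_{B_2},\,T^{Imp}_{B_3,i\lambda_0}\bigr),
\]
on the product of the three data spaces, so that $F=-GT^{*}G^{*}$; since $T$ is block-diagonal there are no cross terms and the identity is just the sum of the three individual factorizations. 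Setting $H:=G^{*}$ puts $F$ in the form $H^{*}\widetilde T H$ demanded by the abstract theory, and the disjointness $\overline{\Omega_1}\cap\overline{B_2}=\emptyset$ (Assumption 1.3(iii)) is what makes $G$ injective, i.e.\ $H$ of dense range.

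Next I would record the algebraic structure of $\widetilde T$. By Lemma~2.1(b) and the corresponding splitting of the medium block, $\widetilde T=\widetilde T_0+(\text{compact})$, where $\widetilde T_0$ is block-diagonal with a self-adjoint \emph{definite} leading part on each auxiliary piece ($-S_{B_2,i}$ negative coercive, $-N_{B_3,i}$ positive coercive, and the $-S_{\Omega_2,i}$ coming from the Dirichlet part of the medium block) together with a multiplication-by-$q$-type leading part on the volume component of the medium block. There are then two hypotheses to verify. The first is the sign condition $\mathrm{Im}\langle\widetilde T\psi,\psi\rangle\ge 0$ for all $\psi$: on the volume component this follows from $\mathrm{Im}\,q\ge 0$ (Assumption 1.3(i)), and on the auxiliary pieces from Lemma~2.1(c),(d), all contributing a non-negative imaginary part; crucially the impedance block on $B_3$ contributes a \emph{strictly coercive} imaginary part, and this is exactly the device that removes the eigenvalue restriction for every $k>0$.

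The main obstacle is the second hypothesis, namely coercivity of the rotated real part of the combined middle operator, which is where the complex contrast forces the rotation $e^{-it}$ and Assumption~1.3(iv). The subtlety is that the blocks carry real-rotated parts of competing signs: on the volume component coercivity comes from $\mathrm{Re}(e^{-it}q)\ge C|q|$ (Assumption 1.3(iv)) together with the local lower bound $|q|\ge c$ on compacts (Assumption 1.3(ii)), whereas on a self-adjoint block one only has $\mathrm{Re}(e^{-it}\widetilde T_0)=\cos t\,\widetilde T_0$, which is negative on one auxiliary block and positive on another, so the quadratic form $\mathrm{Re}\bigl(e^{-it}\langle\widetilde T_0\psi,\psi\rangle\bigr)$ is indefinite and can cancel. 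The resolution is that coercivity is required of the \emph{operator} $\bigl|\mathrm{Re}(e^{-it}\widetilde T_0)\bigr|$, not of its quadratic form: because each self-adjoint block is definite and $\cos t\neq 0$ on the admissible window $t\in(\pi/2,3\pi/2)$ dictated by Assumption~1.3(iv), the operator absolute value splits over the blocks and is bounded below by $|\cos t|\,c$, while the volume block is controlled by the rotated contrast estimate. Assembling these gives $\bigl|\mathrm{Re}(e^{-it}\widetilde T_0)\bigr|\ge c'>0$, and the compact remainder is absorbed in the standard way. Reconciling the boundary conditions on $B_2$ and $B_3$ with the admissible sign window for $t$ is the heart of the argument.

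With both hypotheses verified, the abstract range identity yields $\mathrm{Range}\bigl(F_{\#}^{1/2}\bigr)=\mathrm{Range}(G)$ for $F_{\#}=\bigl|\mathrm{Re}(e^{-it}F)\bigr|+\bigl|\mathrm{Im}F\bigr|$. It then remains to identify $\mathrm{Range}(G)$ geometrically. Since $\mathrm{Range}(G)$ is the sum of the three data-to-pattern ranges, and each summand is the far field of a radiating solution that is analytic outside $\overline{\Omega_1\cup B_2}$ (as $\overline{\Omega_2},\overline{B_3}\subset B_2$), any representation $\phi_z=G^{Mix}\alpha+G^{Dir}_{B_2}\beta+G^{Imp}_{B_3}\gamma$ would, by Rellich's lemma and unique continuation, force the corresponding solution to coincide with $\Phi(\cdot,z)$ off $\overline{\Omega_1\cup B_2}$; since $\Phi(\cdot,z)$ is singular at $z$, this is impossible unless $z\in\overline{\Omega_1\cup B_2}$. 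For $z\notin\overline{B_2}$ this leaves $z\in\Omega_1$. Conversely $z\in\Omega_1=\mathrm{supp}\,q$ gives $\phi_z\in\mathrm{Range}(G^{Mix}_{\Omega_1q,\Omega_2})\subset\mathrm{Range}(G)$ by the standard medium characterization, which uses Assumption~1.3(ii). Finally, Picard's criterion applied to the positive self-adjoint operator $F_{\#}$ with eigensystem $(\lambda_n,\varphi_n)$ converts $\phi_z\in\mathrm{Range}\bigl(F_{\#}^{1/2}\bigr)$ into the convergence of $\sum_n|(\phi_z,\varphi_n)_{L^2(\mathbb{S}^{2})}|^2/\lambda_n$, which is precisely the equivalence (\ref{1.15}).
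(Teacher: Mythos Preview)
Your proposal addresses Theorem~1.4, not the stated Lemma~2.2. Lemma~2.2 concerns the \emph{first} case (Dirichlet on $\Omega_1$, Neumann on $\Omega_2$) and is simply quoted from \cite{Kirsch and Grinberg} without proof in the paper; your sketch instead outlines an argument for the medium/obstacle characterization of Theorem~1.4. Even read as an attempt at Theorem~1.4, the approach has a genuine gap.

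The block-diagonal factorization cannot satisfy hypothesis (2) of Theorem~2.4. With your $T$ block-diagonal, the principal (self-adjoint) parts on the $B_2$ and $B_3$ blocks are, by Lemma~2.1(b), $-S_{B_2,i}$ and $-N_{B_3,i}$; the first is negative coercive and the second positive coercive. After rotation, $\mathrm{Re}(e^{it}T)$ restricted to these two blocks is $(-\cos t)S_{B_2,i}$ and $(-\cos t)N_{B_3,i}$, which have \emph{opposite} definiteness for every $t$ with $\cos t\neq 0$. Hence $\mathrm{Re}(e^{it}T)$ is never of the form (positive coercive) $+$ (compact) on the full product space. Your proposed fix---that only $\bigl|\mathrm{Re}(e^{-it}\widetilde T_0)\bigr|$ need be coercive---is not what Theorem~2.4 asserts: condition (2) explicitly requires $\langle\varphi,C\varphi\rangle\ge c\|\varphi\|^2$, not $|\langle\varphi,C\varphi\rangle|\ge c\|\varphi\|^2$, and the absolute value of a block-diagonal operator does not arise from $F_\#$ unless the factorization already routes through a single $G$.

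The paper's proof of Theorem~1.4 (Section~4) evades this obstruction by a different factorization. It introduces a \emph{single} data-to-pattern map $G^{Mix}_{\Omega_10,B_2}$ on $L^2(\Omega_1)\times H^{1/2}(\partial B_2)$ and factors all three far field operators through it via restriction operators $R_1,R_2,R_3$. The crucial point (Lemma~4.2(a)) is that $R_3$ and $R_1-\mathrm{diag}(I,0)$ are \emph{compact}, so in the resulting middle operator $M$ the would-be $N_{B_3,i}$ and $S_{\Omega_2,i}$ contributions are absorbed into the compact remainder, leaving only the two-block principal part $\mathrm{diag}\bigl(\mathrm{Re}(e^{it}|q|/(k^2q)),\,(-\cos t)S_{B_2,i}\bigr)$, both of whose blocks are positive coercive precisely when $t\in(\pi/2,3\pi/2)$ (Lemma~4.3(a)). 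The impedance term on $B_3$ is then used only to secure strict positivity of the imaginary part and injectivity of $M^*$ (Lemma~4.3(b),(c)), not for coercivity of the real part. This collapse of the sign-conflicting blocks into compact perturbations is exactly the idea your block-diagonal scheme lacks.
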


Thirdly, we consider the far field operator $F^{Mix}_{\Omega_1q,\Omega_2}$ for the problem (\ref{1.6})--(\ref{1.8}). Here, $\Omega_1$ is a penetrable medium modeled by a contrast function $q\in L^{\infty}(\Omega_1)$, and $\Omega_2$ is an impenetrable obstacle with Dirichlet boundary condition. We define $G^{Mix}_{\Omega_1q,\Omega_2}: L^{2}( \Omega_1) \times H^{1/2}(\partial \Omega_2) \to L^{2}(\mathbb{S}^{2})$ by
\begin{equation}
G^{Mix}_{\Omega_1q,\Omega_2}
\left(
    \begin{array}{cc}
      f \\
      g 
    \end{array}
  \right)
  :=v^{\infty},
\label{2.8}
\end{equation}
where $v^{\infty}$ is the far field pattern of a radiating solution $v$ such that   
\begin{equation}
\Delta v+k^2(1+q)v=-k^2 \frac{q}{\sqrt{|q|}}f \ \mathrm{in} \ \mathbb{R}^3\setminus  \overline{\Omega_2}, \label{2.9}
\end{equation}
\begin{equation}
v=-g \ \mathrm{on} \ \partial{\Omega_2}. \label{2.11}
\end{equation}
The following properties of $F^{Mix}_{\Omega_1q,\Omega_2}$ are given by previous works in \cite{Kirsch and Liu2}: 

\begin{lem}[Theorem 3.2 and Theorem 3.3 in \cite{Kirsch and Liu2}]
\begin{description}
\item[(a)] The far field operator $F^{Mix}_{\Omega_1q,\Omega_2}$ has a factorization of the form 
\begin{equation}
F^{Mix}_{\Omega_1q,\Omega_2}=G^{Mix}_{\Omega_1q,\Omega_2}M^{Mix\ *}_{\Omega_1q,\Omega_2}G^{Mix\ *}_{\Omega_1q,\Omega_2}. \label{2.11}
\end{equation}
\item[(b)] The middle operator $M^{Mix}_{\Omega_1q,\Omega_2}: L^{2}(\Omega_1) \times H^{-1/2}(\partial \Omega_2) \to L^{2}(\Omega_1) \times H^{1/2}(\partial \Omega_2)$ is of the form
\begin{equation}
M^{Mix}_{\Omega_1q,\Omega_2}=
\left(
    \begin{array}{cc}
      \frac{|q|}{k^2q} & 0 \\
      0 & -S_{\Omega_2,i}
    \end{array}
  \right)
+K, \label{2.12}
\end{equation}
where $K$ is some compact operator.

\item[(c)] 
$\mathrm{Im} \langle \varphi,  M^{Mix}_{\Omega_1q,\Omega_2}\varphi \rangle \geq 0$ \ for \ all \ $\varphi \in L^{2}(\Omega_1) \times H^{-1/2}(\partial \Omega_2)$.

\item[(d)] 
If \ $M^{Mix}_{\Omega_1q,\Omega_2}\varphi=0$
    ,\ $\varphi=\left( \begin{array}{cc}
      \varphi_1 \\
      \varphi_2
    \end{array}\right) \in L^{2}(\Omega_1) \times H^{-1/2}(\partial \Omega_2)$, then $\varphi_1=0$.
\end{description}
\end{lem}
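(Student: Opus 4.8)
The plan is to follow the general factorization scheme of Kirsch and Grinberg: first realize $F:=F^{Mix}_{\Omega_1q,\Omega_2}$ as a composition $GH$ of the source-to-far-field operator $G:=G^{Mix}_{\Omega_1q,\Omega_2}$ with a Herglotz data operator $H$, and then rewrite $H$ through the adjoint $G^{*}$; this yields (a) and (b), while (c) and (d) follow from a radiation/energy identity. For $g\in L^{2}(\mathbb{S}^{2})$ let $v_g(x)=\int_{\mathbb{S}^{2}}e^{ik\theta\cdot x}g(\theta)\,ds(\theta)$ be the Herglotz wave function, a solution of the homogeneous Helmholtz equation. Since $Fg$ is the far field of the scattered wave $u^{s}$ generated by the incident field $u^{i}=v_g$ in (\ref{1.6})--(\ref{1.8}), I would compare this radiating problem with the definition of $G$. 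Taking the volume datum $f=\sqrt{|q|}\,v_g|_{\Omega_1}$, so that $\tfrac{q}{\sqrt{|q|}}f=q\,v_g$, and the boundary datum $v_g|_{\partial\Omega_2}$ (matching $u^{s}=-v_g$ on $\partial\Omega_2$), the two problems coincide. Hence $F=GH$, where $Hg:=(\sqrt{|q|}\,v_g|_{\Omega_1},\,v_g|_{\partial\Omega_2})$ maps $L^{2}(\mathbb{S}^{2})$ into $L^{2}(\Omega_1)\times H^{1/2}(\partial\Omega_2)$.

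Next I would compute the adjoint $G^{*}\colon L^{2}(\mathbb{S}^{2})\to L^{2}(\Omega_1)\times H^{-1/2}(\partial\Omega_2)$. Using the Green's representation of the radiating solution defining $G$ (a volume potential over $\Omega_1$ plus layer potentials over $\partial\Omega_2$) together with the reciprocity relation $\Phi^{\infty}(\hat{x},y)=e^{-ik\hat{x}\cdot y}$, one finds that $G^{*}g$ consists of a Neumann-type trace on $\partial\Omega_2$ and a weighted restriction to $\Omega_1$ of the Herglotz field $v_g$, with the conjugations dictated by the $L^{2}$ inner product. The crux is then to define the bounded operator $M:=M^{Mix}_{\Omega_1q,\Omega_2}$ through the solution operator of the problem so that $H=M^{*}G^{*}$; this gives the factorization $F=GM^{*}G^{*}$ of part (a). Reading off the diagonal blocks, the medium part is multiplication by $\tfrac{|q|}{k^{2}q}$, whose adjoint intertwines the two weighted restrictions of $v_g$ to $\Omega_1$, while the obstacle part is the single-layer operator $S_{\Omega_2}$ converting the Neumann datum back to the Dirichlet datum on $\partial\Omega_2$. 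For (b) I would invoke that $\Phi$ at wave numbers $k$ and $i$ differ by a kernel with a weaker singularity, so that $S_{\Omega_2}=S_{\Omega_2,i}+(\text{compact})$, and that each coupling term linking $\Omega_1$ and $\partial\Omega_2$ is an integral operator with a smooth kernel, hence compact; this isolates the principal part $\mathrm{diag}\bigl(\tfrac{|q|}{k^{2}q},-S_{\Omega_2,i}\bigr)$ and collects the remainder into $K$.

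For part (c) I would derive an energy identity for the sesquilinear form $\langle\varphi,M\varphi\rangle$ with $\varphi=(\varphi_1,\varphi_2)$. The medium block contributes $\int_{\Omega_1}\mathrm{Im}\bigl(\tfrac{|q|}{k^{2}q}\bigr)|\varphi_1|^{2}=\tfrac{1}{k^{2}}\int_{\Omega_1}\tfrac{\mathrm{Im}\,q}{|q|}|\varphi_1|^{2}\ge 0$ by Assumption 1.3(i), while the obstacle block, after applying Green's first identity to the associated radiating field $w$ and using the Sommerfeld radiation condition, contributes a far-field energy term proportional to $\|w^{\infty}\|_{L^{2}(\mathbb{S}^{2})}^{2}\ge 0$, with the sign consistent with the $-S_{\Omega_2,i}$ block and Lemma 2.1(c). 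Hence $\mathrm{Im}\langle\varphi,M\varphi\rangle\ge 0$. For (d), if $M\varphi=0$ then $\mathrm{Im}\langle\varphi,M\varphi\rangle=0$, forcing both contributions to vanish; vanishing of $w^{\infty}$ gives $w\equiv0$ in the unbounded component by Rellich's lemma, which propagates into $\Omega_1$ by unique continuation for $\Delta+k^{2}(1+q)$ with $q\in L^{\infty}$. The Helmholtz equation then forces the source $\tfrac{q}{\sqrt{|q|}}\varphi_1$ to vanish, whence $\varphi_1=0$, since $\Omega_1=\mathrm{supp}\,q$ and $|q|$ is locally bounded below by Assumption 1.3(ii).

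I expect the main obstacle to be the middle paragraph: computing $G^{*}$ correctly and, above all, identifying $M$ together with the proof that the medium--obstacle coupling terms are compact. This is where the bookkeeping of the Green's representation, the mapping properties of the volume and layer potentials between the coupled spaces $L^{2}(\Omega_1)\times H^{\pm1/2}(\partial\Omega_2)$, and the complex conjugations entering the adjoint all have to be reconciled simultaneously, and it is the only step where the precise structure of the two physically different scatterers genuinely interacts.
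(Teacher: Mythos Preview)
The paper does not give a proof of this lemma; it simply quotes Theorems~3.2 and~3.3 of \cite{Kirsch and Liu2} as a black box. Your outline of (a)--(c) is essentially the standard route taken there: realise $F$ as $GH$ via Herglotz waves, identify $H$ with $M^{*}G^{*}$ through the Green representation, read off the diagonal principal part, and use that $S_{\Omega_2}-S_{\Omega_2,i}$ is compact together with smoothness of the medium--obstacle coupling kernels for (b); the energy identity obtained from Green's formula and the radiation condition yields (c). Your assessment that the bookkeeping in constructing $M$ and isolating its compact remainder is the delicate step is accurate.

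There is, however, a genuine gap in your argument for (d). You use $M\varphi=0$ only through $\mathrm{Im}\langle\varphi,M\varphi\rangle=0$, conclude $w^{\infty}=0$ and hence $w\equiv 0$ in the unbounded component, and then write that this ``propagates into $\Omega_{1}$ by unique continuation for $\Delta+k^{2}(1+q)$''. But in $\Omega_{1}$ the field $w$ satisfies the \emph{inhomogeneous} equation
\[
\Delta w+k^{2}(1+q)w=-k^{2}\frac{q}{\sqrt{|q|}}\varphi_{1},
\]
and unique continuation applies only to solutions of the homogeneous equation; a nonzero right-hand side supported in $\Omega_{1}$ is perfectly compatible with $w$ vanishing on an open set outside $\Omega_{1}$. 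To close the argument you must exploit the full information $(M\varphi)_{1}=0$, not merely its imaginary part: the explicit first component of $M$ relates $\varphi_{1}$ to $\sqrt{|q|}\,w|_{\Omega_{1}}$ algebraically, and substituting this relation back into the PDE cancels the source, leaving a homogeneous equation with vanishing Cauchy data on $\partial\Omega_{1}$ inherited from $w\equiv 0$ outside. Only then does unique continuation legitimately give $w\equiv 0$ in $\Omega_{1}$, whence $\varphi_{1}=0$.
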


Finally, we give the following functional analytic theorem behind the factorization method. The proof is completely analogous to previous works, e.g., Theorem 2.15 in \cite{Kirsch and Grinberg},  Theorem 2.1 in \cite{Lechleiter}, and Theorem 2.1 in \cite{Liu}.

\begin{thm}
Let $X \subset U\subset X^{*}$ be a Gelfand triple with a Hilbert space $U$ and a reflexive Banach space $X$ such that the imbedding is dense. Furthermore, let Y be a second Hilbert space and let $F:Y \to Y$, $G:X \to Y$, $T:X^{*} \to X$ be linear bounded operators such that 
\begin{equation}
F=GTG^{*}.\label{2.19}
\end{equation}
We make the following assumptions:
\begin{description}
\item[(1)]G is compact with dense range in Y.

\item[(2)]There exists $t\in [0,2 \pi]$ such that $\mathrm{Re}(\mathrm{e}^{it}T)$ has the form $\mathrm{Re}(\mathrm{e}^{it}T)=C+K$ with some compact operator $K$ and some self-adjoint and positive coercive operator $C$, i.e., there exists $c>0$ such that
\begin{equation}
\langle \varphi,  C \varphi \rangle \geq c \left\| \varphi \right\|^2 \ for \ all \ \varphi \in X^{*}. \label{2.20}
\end{equation}

\item[(3)]$\mathrm{Im} \langle \varphi, T \varphi \rangle \geq 0$ or $\mathrm{Im} \langle \varphi, T \varphi \rangle \leq 0$ for all $\varphi \in X^{*}$.
\end{description}
Furthermore, we assume that one of the following assumptions:
\begin{description}
\item[(4)]
$T$ is injective.
\item[(5)]
$\mathrm{Im} \langle \varphi, T \varphi \rangle > 0$ or $\mathrm{Im} \langle \varphi, T \varphi \rangle < 0$ for all $\varphi \in \overline{\mathrm{Ran}(G^{*})}$ with $\varphi \neq 0$.
\end{description}
Then, the operator $F_{\#}:=\bigl|\mathrm{Re}(\mathrm{e}^{it}F)\bigr|+\bigl|\mathrm{Im}F\bigr|$ is positive, and the ranges of $G:X \to Y$ and $F_{\#}^{1/2}:Y \to Y$ coincide with each other.
\end{thm}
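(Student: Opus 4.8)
The plan is to follow the by-now standard scheme for this kind of $F_\#$-identity, adapting the arguments of the references cited just before the statement. The strategy is: (i) write $F_\#$ in the form $GWG^*$ with $W\colon X^*\to X$ self-adjoint and coercive on $\overline{\mathrm{Ran}(G^*)}$, and (ii) deduce from this, via the usual range lemma (Douglas' lemma, as in Theorem~2.15 of \cite{Kirsch and Grinberg}), that $\mathrm{Ran}(F_\#^{1/2})=\mathrm{Ran}(G)$. First I would record the easy structural facts. Since $G$ is compact so is $G^*$, hence $F=GTG^*$ is compact, and $\mathrm{Re}(e^{it}F)$, $\mathrm{Im}F$ are self-adjoint compact operators on $Y$; their absolute values are positive, self-adjoint and compact, so $F_\#$ is a positive, self-adjoint, compact operator — this is the asserted positivity and makes $F_\#^{1/2}$ meaningful. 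Taking real and imaginary parts of $F=GTG^*$ gives $\mathrm{Re}(e^{it}F)=G\,\mathrm{Re}(e^{it}T)\,G^*$ and $\mathrm{Im}F=G\,\mathrm{Im}T\,G^*$. By assumption (3) $\mathrm{Im}T$ has a fixed sign, so $\pm\mathrm{Im}F=G(\pm\mathrm{Im}T)G^*\ge 0$, whence $|\mathrm{Im}F|=G\,|\mathrm{Im}T|\,G^*$ with $|\mathrm{Im}T|:=\pm\mathrm{Im}T\ge 0$ bounded; the imaginary part is thus already in ``$G(\cdot)G^*$ form''.

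The main work is to put $|\mathrm{Re}(e^{it}F)|$ into the same form. Using assumption (2), $\mathrm{Re}(e^{it}T)=C+K$ with $C$ self-adjoint positive coercive (hence boundedly invertible by Lax--Milgram) and $K$ compact. Introducing the equivalent inner product $\langle C\,\cdot\,,\cdot\rangle$ on $X^*$, the operator $I+C^{-1}K$ is a compact self-adjoint perturbation of the identity, so it has only finitely many negative eigenvalues; translating back through $C$ one gets a decomposition $\mathrm{Re}(e^{it}T)=\widetilde C-L$ with $\widetilde C$ self-adjoint positive coercive and $L\ge 0$ of finite rank. Then $\mathrm{Re}(e^{it}F)=G\widetilde C G^*-GLG^*$ is a positive compact operator minus a finite-rank positive one, and testing the eigenvalue relation against an eigenvector shows that every negative eigenvector $\psi$ of $\mathrm{Re}(e^{it}F)$ satisfies $\langle GLG^*\psi,\psi\rangle>0$, hence $\psi\in\mathrm{Ran}(GLG^*)\subseteq\mathrm{Ran}(G)$. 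Thus the negative part $(\mathrm{Re}(e^{it}F))_-$ is finite-rank with range in $\mathrm{Ran}(G)$; writing its spectral decomposition $\sum_j\nu_j(\cdot,\psi_j)\psi_j$ with $\psi_j=G\xi_j$ and using $\langle G^*\psi_i,\xi_j\rangle=(\psi_i,G\xi_j)_Y$, one realizes $(\mathrm{Re}(e^{it}F))_-=GDG^*$ with $D\colon X^*\to X$ finite-rank and $\ge 0$. Hence $|\mathrm{Re}(e^{it}F)|=\mathrm{Re}(e^{it}F)+2(\mathrm{Re}(e^{it}F))_-=G\bigl(\mathrm{Re}(e^{it}T)+2D\bigr)G^*$, and combining with the previous paragraph $F_\#=GWG^*$ where $W:=\mathrm{Re}(e^{it}T)+2D+|\mathrm{Im}T|=\bigl(C+|\mathrm{Im}T|\bigr)+\bigl(K+2D\bigr)$ is self-adjoint of the form (coercive)$\,+\,$(compact), since $C+|\mathrm{Im}T|$ is coercive and $K+2D$ is compact.

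It remains to show $W$ is coercive on $Z:=\overline{\mathrm{Ran}(G^*)}$; this is the point I expect to be delicate. From $F_\#=GWG^*\ge 0$ one gets $\langle W\varphi,\varphi\rangle\ge 0$ for $\varphi\in Z$, and a self-adjoint operator of the form (coercive)$\,+\,$(compact) that is non-negative on the closed subspace $Z$ is coercive on $Z$ as soon as it is injective there (its essential spectrum on $Z$ lies in $[c,\|C+|\mathrm{Im}T|\,\|]$, so $0$ is at worst an isolated eigenvalue). So one must rule out $0\neq\varphi\in Z$ with $\langle W\varphi,\varphi\rangle=0$: taking $\psi\in Y$ with $G^*\psi=\varphi$ (or a limiting sequence) this forces $F_\#\psi=0$, hence $|\mathrm{Re}(e^{it}F)|^{1/2}\psi=0$ and $|\mathrm{Im}F|^{1/2}\psi=0$, i.e. $\mathrm{Re}(e^{it}F)\psi=\mathrm{Im}F\psi=0$ and in particular $\langle|\mathrm{Im}T|\varphi,\varphi\rangle=0$. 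Under (5), the sign of $\mathrm{Im}T$ turns this into $\mathrm{Im}\langle\varphi,T\varphi\rangle=0$, contradicting (5) unless $\varphi=0$; under (4) one instead deduces $F\psi=0$, hence $TG^*\psi\in\ker G=Z^{\perp}$, and working with the compression to $Z$ one gets $TG^*\psi=0$, so $G^*\psi=0$, so $\psi\perp\mathrm{Ran}(G)$ and $\psi=0$; the borderline values $t\in\{\pi/2,3\pi/2\}$ are covered by the same computation on noting that there $\mathrm{Re}(e^{it}T)=\mp\mathrm{Im}T$. Once $W$ is coercive on $Z$, the estimate $c\|G^*\psi\|^2\le\langle F_\#\psi,\psi\rangle=\langle WG^*\psi,G^*\psi\rangle\le\|W\|\,\|G^*\psi\|^2$ gives $\|F_\#^{1/2}\psi\|\asymp\|G^*\psi\|$ for all $\psi\in Y$, and Douglas' range lemma then yields $\mathrm{Ran}(F_\#^{1/2})=\mathrm{Ran}(G)$. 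The principal obstacle is precisely this injectivity/coercivity of the middle operator $W$ — the only place where (4) or (5) is genuinely used — together with the bookkeeping required to keep every object in ``$G(\cdot)G^*$'' form on $\overline{\mathrm{Ran}(G^*)}$.
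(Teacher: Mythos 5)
Your proposal is correct and follows essentially the same route as the paper, which omits the proof and refers to Theorem 2.15 of \cite{Kirsch and Grinberg}, Theorem 2.1 of \cite{Lechleiter}, and Theorem 2.1 of \cite{Liu}: split $\mathrm{Re}(\mathrm{e}^{it}T)$ into coercive plus compact, absorb the finite\--dimensional negative part of $\mathrm{Re}(\mathrm{e}^{it}F)$ into a term of the form $GDG^{*}$, obtain $F_{\#}=GWG^{*}$ with $W$ coercive on $\overline{\mathrm{Ran}(G^{*})}$ using (4) or (5), and conclude via the range (Douglas\--type) lemma. The only points you gloss over --- passing from $\mathrm{Ran}(G^{*})$ to its closure via a weak\--compactness/contradiction argument in the reflexive space $X^{*}$, and the borderline case $\cos t=0$ under (4) --- are handled exactly as in those references and do not affect the applications in this paper, where $t=\pi$ or condition (5) holds.
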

Remark that, in this paper, the real part and the imaginary part of an operator $A$ are self-adjoint operators given by
\begin{equation}
\mathrm{Re}(A)=\displaystyle \frac{A+A^{*}}{2} \ \ \ \mathrm{and} \ \ \ \mathrm{Im}(A)=\displaystyle \frac{A-A^{*}}{2i}.
\end{equation}

\section{The first case}
In section 3, we prove Theorem 1.2. Let Assumption 1.1 hold. We define $R_1:H^{1/2}(\partial \Omega_1) \times H^{-1/2}(\partial \Omega_2) \to H^{1/2}(\partial \Omega_1) \times H^{1/2}(\partial B_2)$ by 
\begin{equation}
R_1\left(
    \begin{array}{cc}
      f_1 \\
      g_1 
    \end{array}
  \right)
  :=\left(
    \begin{array}{cc}
      f_1 \\
      v_1 \bigl|_{\partial B_2} 
    \end{array}
  \right),\label{3.1}
\end{equation}  
where $v_1$ is a radiating solution such that 

\begin{equation}
\Delta v_1+k^2v_1=0 \ \mathrm{in} \ \mathbb{R}^3\setminus  \overline{\Omega}, \label{3.2}
\end{equation}
\begin{equation}
v_1=f_1 \ \mathrm{on} \ \partial{\Omega_1}, \ \ \ \ \ \frac{\partial v_1}{\partial \nu_{\Omega_2}}=g_1 \ \mathrm{on} \ \partial{\Omega_2}. \label{3.3}
\end{equation}
Then, from the definition of $R_1$, we obtain
\begin{equation}
G^{Mix}_{\Omega_1,\Omega_2}=G^{
Dir}_{\Omega_1, B_2}R_1,\label{3.4}
\end{equation}
where $G^{Dir}_{\Omega_1, B_2}:H^{1/2}(\partial \Omega_1) \times H^{1/2}(\partial B_2) \to L^{2}(\mathbb{S}^{2})$ is also defined for the pure Dirichlet boundary condition on $\Omega_1$ and $B_2$ in the same way as $G^{Mix}_{\Omega_1,\Omega_2}$. (See (\ref{2.1}).)\par
Next, we define $R_2:H^{1/2}(\partial B_2) \to H^{1/2}(\partial \Omega_1) \times H^{1/2}(\partial B_2)$ by

\begin{equation}
  R_2 f_2
  :=\left(
    \begin{array}{cc}
      v_2\bigl|_{\partial \Omega_1}  \\
      f_2
    \end{array}
  \right),\label{3.5}
\end{equation}
where $v_2$ is a radiating solution such that 

\begin{equation}
\Delta v_2+k^2v_2=0 \ \mathrm{in} \ \mathbb{R}^3\setminus  \overline{B_2}, \label{3.6}
\end{equation}
\begin{equation}
v_2=f_2 \ \mathrm{on} \ \partial{B_2}, \label{3.7}
\end{equation}
Then, from the definition of $R_2$, we obtain
\begin{equation}
G^{Dir}_{B_2}=G^{Dir}_{\Omega_1, B_2}R_2. \label{3.8}
\end{equation}
Here, take a positive number $\lambda_0>0$, and a bounded domain $B_3$ with $\overline{B_3} \subset B_2$. We define $R_3:H^{-1/2}(\partial B_1 \cup \partial B_3) \to H^{1/2}(\partial \Omega_1) \times H^{1/2}(\partial B_2)$ by

\begin{equation}
  R_3 f_3
  :=\left(
    \begin{array}{cc}
      v_3\bigl|_{\partial \Omega_1}  \\
      v_3\bigl|_{\partial B_2}
    \end{array}
  \right),\label{3.9}
\end{equation}
where $v_3$ is a radiating solution such that 

\begin{equation}
\Delta v_3+k^2v_3=0 \ \mathrm{in} \ \mathbb{R}^3\setminus  \overline{B_1 \cup B_3}, \label{3.10}
\end{equation}
\begin{equation}
\frac{\partial v_3}{\partial \nu_{B_1\cup B_3}}+i\lambda_0 v_3=f_3 \ \mathrm{on} \ \partial B_1 \cup \partial B_3. \label{3.11}
\end{equation}
Then, from the definition of $R_3$, we obtain
\begin{equation}
G^{Imp}_{B_1\cup B_3,i\lambda_0}=G^{Dir}_{\Omega_1 , B_2}R_3.\label{3.12}
\end{equation}
By (\ref{3.4}), (\ref{3.8}), (\ref{3.12}), and the factorization of the far field operator in Section 2, we have
\begin{equation}
F^{Mix}_{\Omega_1,\Omega_2}+F^{Dir}_{B_2}+F^{Imp}_{B_1 \cup B_3,i\lambda_0}
=G^{Dir}_{\Omega_1,B_2}TG^{Dir\ *}_{\Omega_1,B_2}, \label{3.13}
\end{equation}
where $T:=\Bigl[-R_1T^{Mix\ *}_{\Omega_1,\Omega_2}R^{*}_1-R_2S^{*}_{B_2}R^{*}_2-R_3T^{Imp\ *}_{B_1 \cup B_3,i\lambda_0}R^{*}_3 \Bigr]$.\par
The following properties of $G^{Dir}_{\Omega_1, B_2}$ are given by the same argument in Theorem 1.12 and Lemma 1.13 in \cite{Kirsch and Grinberg}:

\begin{lem}
\begin{description}
\item[(a)]
The operator $G^{Dir}_{\Omega_1, B_2}:H^{1/2}(\partial \Omega_1) \times H^{1/2}(\partial B_2) \to L^{2}(\mathbb{S}^{2})$ is compact with dense range in $L^{2}(\mathbb{S}^{2})$.
\item[(b)]
For $z \in \mathbb{R}^3 \setminus  \overline{B_2}$ 
\begin{equation}
z \in \Omega_1
\Longleftrightarrow
\phi_z \in \mathrm{Ran}(G^{Dir}_{\Omega_1, B_2}), \label{3.14}
\end{equation}
where the function $\phi_z$ is given by $($$\ref{1.11}$$)$.
\end{description}
\end{lem}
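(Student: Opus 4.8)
The plan is to treat $G^{Dir}_{\Omega_1,B_2}$ exactly as the operator $G^{Dir}_\Lambda$ for a pure Dirichlet scatterer $\Lambda$ is treated in the classical factorization-method literature, with $\Lambda := \Omega_1 \cup B_2$, noting that $\partial\Lambda = \partial\Omega_1 \cup \partial B_2$ since $\overline{\Omega_1}\cap\overline{B_2}=\emptyset$ by Assumption 1.1. For part (a), compactness follows because $G^{Dir}_{\Omega_1,B_2}$ factors through the trace-to-far-field map: given boundary data $(f,g)\in H^{1/2}(\partial\Omega_1)\times H^{1/2}(\partial B_2)$, one solves the exterior Dirichlet problem in $\mathbb{R}^3\setminus\overline{\Lambda}$ (well-posed by the usual combined-layer argument, since the exterior Dirichlet problem has no eigenvalues), and the far field pattern depends on $v$ only through its values on any sphere enclosing $\Lambda$, where interior elliptic regularity gives a smoothing (hence compact) step. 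Density of the range is the standard duality/Rellich argument: if $g\in L^2(\mathbb{S}^2)$ is orthogonal to $\mathrm{Ran}(G^{Dir}_{\Omega_1,B_2})$, then the Herglotz wave function $v_g$ with kernel $g$ must have vanishing data on $\partial\Omega_1\cup\partial B_2$ in a suitable weak sense; testing against single-layer potentials supported on $\partial\Omega_1$ and on $\partial B_2$ separately and using that $\mathbb{R}^3\setminus\overline{\Omega_1\cup B_2}$ is connected forces $v_g\equiv 0$, hence $g=0$. I would simply cite Theorem 1.12 of \cite{Kirsch and Grinberg} for this, as the excerpt already signals.

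For part (b), the key point is the range characterization $\phi_z \in \mathrm{Ran}(G^{Dir}_{\Omega_1,B_2}) \iff z\in\Omega_1$ for $z\in\mathbb{R}^3\setminus\overline{B_2}$, which is where the asymmetry between $\Omega_1$ and $B_2$ enters. Recall $\phi_z$ is the far field pattern of the point-source $\Phi(\cdot,z)$. The implication $z\in\Omega_1 \Rightarrow \phi_z\in\mathrm{Ran}$: if $z\in\Omega_1$, then $\Phi(\cdot,z)$ is a radiating solution of the Helmholtz equation in $\mathbb{R}^3\setminus\overline{\Omega_1\cup B_2}$ (the singularity sits inside $\Omega_1$, and $z\notin\overline{B_2}$ is given), so setting $f:=\Phi(\cdot,z)|_{\partial\Omega_1}$, $g:=\Phi(\cdot,z)|_{\partial B_2}$ and using uniqueness of the exterior Dirichlet problem, the solution $v$ defining $G^{Dir}_{\Omega_1,B_2}(f,g)^\top$ equals $\Phi(\cdot,z)$ outside $\Omega_1\cup B_2$, so its far field pattern is $\phi_z$. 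The converse: suppose $\phi_z = G^{Dir}_{\Omega_1,B_2}(f,g)^\top$ with associated exterior solution $v$; by Rellich's lemma and unique continuation, $v = \Phi(\cdot,z)$ in the connected unbounded component of $\mathbb{R}^3\setminus(\overline{\Omega_1\cup B_2}\cup\{z\})$. If $z\notin\overline{\Omega_1}$, then (since also $z\notin\overline{B_2}$) $z$ lies in that unbounded component, and $v$ — which is smooth near $z$ because $v\in H^1$ near $z$ and solves Helmholtz there — cannot equal the singular $\Phi(\cdot,z)$; contradiction. Hence $z\in\overline{\Omega_1}$, and a boundary-regularity refinement upgrades this to $z\in\Omega_1$. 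The main obstacle is handling $z$ on the interface or verifying that the exterior solution extends as a solution right up to $z$; this is resolved exactly as in \cite{Kirsch and Grinberg}, Lemma 1.13, by analytic continuation of $v$ into the exterior of the \emph{smaller} set on which it is known to be singularity-free.

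In short, Lemma 3.2 is an instance of the known Dirichlet range characterization applied to the disjoint obstacle $\Omega_1\cup B_2$, with the restriction $z\in\mathbb{R}^3\setminus\overline{B_2}$ precisely ensuring that the $B_2$-component of the data can always be chosen compatibly (the point source is regular across $\partial B_2$) so that membership of $\phi_z$ in the range is governed solely by the position of $z$ relative to $\Omega_1$. I would write the proof as two short paragraphs invoking \cite{Kirsch and Grinberg} for the analytic machinery and spelling out only the placement-of-singularity argument sketched above.
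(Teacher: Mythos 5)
Your proposal is correct and follows exactly the route the paper takes: the paper gives no proof of this lemma, simply asserting that it follows "by the same argument in Theorem 1.12 and Lemma 1.13 in \cite{Kirsch and Grinberg}" applied to the disjoint Dirichlet obstacle $\Omega_1\cup B_2$, which is precisely your reduction. Your sketch of the placement-of-singularity argument (including the boundary case $z\in\partial\Omega_1$ and the role of the restriction $z\notin\overline{B_2}$) fills in the standard details consistently with that citation.
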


To prove Theorem 1.2, we apply Theorem 2.4 to this case. First of all, we show the following lemma:
\begin{lem}
\begin{description}
\item[(a)] 
 $R_1-\left(
    \begin{array}{cc}
      I & 0 \\
      0 & 0
    \end{array}
  \right)$, \ 
  $R_2-P_2$, $R_3$
are  compact. Here, $P_2:H^{1/2}(\partial B_2) \to H^{1/2}(\partial \Omega_1) \times H^{1/2}(\partial B_2)$
is defined by
\begin{equation}
  P_2 h
  :=\left(
    \begin{array}{cc}
      0 \\
      h
    \end{array}
  \right). \label{3.15}
\end{equation}

\item[(b)]
$R_3^{*}$ is injective.
\end{description}
\end{lem}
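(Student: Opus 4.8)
The plan is to prove (a) by exploiting the smoothing effect of solving an exterior Helmholtz problem whose data is prescribed on one piece of the boundary and then restricted to a \emph{disjoint} piece. For the operator $R_1$: write $R_1\binom{f_1}{g_1}=\binom{f_1}{v_1|_{\partial B_2}}$ where $v_1$ solves (\ref{3.2})--(\ref{3.3}). The first component is exactly the identity, which is why we subtract $\mathrm{diag}(I,0)$. For the second component I would argue that the map $\binom{f_1}{g_1}\mapsto v_1|_{\partial B_2}$ is compact: since $\overline{\Omega}\cap\overline{B_2}=\emptyset$ (a consequence of Assumption 1.1, which gives $\overline{\Omega_1}\cap\overline{B_2}=\emptyset$ and $\overline{\Omega_2}\subset B_2$ together with $\overline{\Omega_1}\cap\overline{\Omega_2}=\emptyset$; one needs $\partial B_2$ to lie in $\mathbb{R}^3\setminus\overline{\Omega}$, which holds because $\partial B_2\subset\overline{B_2}$ and $\Omega_2\subset B_2$ is an interior containment while $\Omega_1$ is exterior to $B_2$), the solution $v_1$ is real-analytic in a neighborhood of $\partial B_2$, so the trace operator into $H^{1/2}(\partial B_2)$ factors through, say, $H^{2}$ of a fixed compact neighbourhood of $\partial B_2$ contained in the (connected) exterior region, and the embedding $H^{2}\hookrightarrow H^{1/2}$ on $\partial B_2$ is compact. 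Combined with the continuous dependence $v_1\mapsto$ (its restriction to that neighbourhood) from the well-posedness of the exterior problem, this yields compactness. The same interior-elliptic-regularity argument handles $R_2-P_2$: the first component $v_2|_{\partial\Omega_1}$ of $R_2 f_2$ comes from $v_2$ solving (\ref{3.6})--(\ref{3.7}) in $\mathbb{R}^3\setminus\overline{B_2}$, and $\partial\Omega_1$ is disjoint from $\overline{B_2}$, so $v_2$ is analytic near $\partial\Omega_1$ and the restriction is compact; the second component is the identity, hence $R_2-P_2$ is compact. Finally $R_3$ itself is compact because both components are traces, on $\partial\Omega_1$ and on $\partial B_2$, of $v_3$ solving the exterior impedance problem (\ref{3.10})--(\ref{3.11}) in $\mathbb{R}^3\setminus\overline{B_1\cup B_3}$, and $\overline{B_1}\subset\Omega_1$, $\overline{B_3}\subset B_2$ are both disjoint from $\partial\Omega_1\cup\partial B_2$ (here is where one must check $\partial\Omega_1\cap\overline{B_3}=\emptyset$, which follows from $\overline{B_3}\subset B_2$ and $\overline{\Omega_1}\cap\overline{B_2}=\emptyset$, and $\partial B_2\cap\overline{B_1}=\emptyset$ from $\overline{B_1}\subset\Omega_1$ and $\overline{\Omega_1}\cap\overline{B_2}=\emptyset$, and $\partial B_2\cap\overline{B_3}=\emptyset$ from $\overline{B_3}\subset B_2$), so again $v_3$ is analytic near the target boundaries and the trace is compact.

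For part (b), I would show $R_3^{*}$ is injective by instead showing $\overline{\mathrm{Ran}(R_3)}=H^{1/2}(\partial\Omega_1)\times H^{1/2}(\partial B_2)$, since a bounded operator between Hilbert spaces has injective adjoint iff it has dense range. Suppose $\binom{\psi_1}{\psi_2}\in H^{-1/2}(\partial\Omega_1)\times H^{-1/2}(\partial B_2)$ annihilates $\mathrm{Ran}(R_3)$, i.e.\ $\langle v_3|_{\partial\Omega_1},\psi_1\rangle+\langle v_3|_{\partial B_2},\psi_2\rangle=0$ for every $f_3\in H^{-1/2}(\partial B_1\cup\partial B_3)$, where $v_3=v_3[f_3]$ solves (\ref{3.10})--(\ref{3.11}). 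Form the combined single-/double-layer potential $w$ of $(\psi_1,\psi_2)$ over $\partial\Omega_1\cup\partial B_2$; then $w$ is a radiating solution of the Helmholtz equation off $\partial\Omega_1\cup\partial B_2$, and by the pairing identity (an application of Green's representation formula relating $v_3$ and $w$ across the two surfaces) the vanishing of the functional forces the Cauchy data of $w$ on $\partial B_1\cup\partial B_3$ — which is where the data $f_3$ of $v_3$ lives — to be compatible with $w\equiv0$ in the exterior of $B_1\cup B_3$; unique continuation from the unbounded component then propagates $w=0$ through the connected region $\mathbb{R}^3\setminus\overline{\Omega_1\cup B_2}$, and the jump relations of layer potentials across $\partial\Omega_1$ and $\partial B_2$ give $\psi_1=0$, $\psi_2=0$. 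This is the standard denseness argument used for $G^{Dir}$-type operators in \cite{Kirsch and Grinberg}, transplanted to the auxiliary surfaces; alternatively one can cite the analogue of Lemma 1.13 in \cite{Kirsch and Grinberg} directly, since the geometry (source surfaces $\partial B_1\cup\partial B_3$ well inside, observation surfaces $\partial\Omega_1$ and $\partial B_2$ with connected complement) is exactly of that type.

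The main obstacle I anticipate is part (b): making the duality/representation-formula argument airtight requires carefully tracking which surface carries the impedance condition versus the Dirichlet restriction, and ensuring the unique-continuation step has a connected domain to work in — this is precisely why the disjointness clauses in Assumption 1.1 (and the placement $\overline{B_3}\subset B_2$) are needed, and one should state explicitly that $\mathbb{R}^3\setminus\overline{\Omega_1\cup B_2}$ is connected (which follows from $\mathbb{R}^3\setminus\overline{\Omega}$ connected together with $\overline{\Omega_2}\subset B_2$ and the disjointness of $\overline{\Omega_1}$ from $\overline{B_2}$). Part (a), by contrast, is routine once the disjointness of the data-surface from the restriction-surface is used to invoke interior analyticity plus a compact Sobolev embedding; the only care needed is to phrase the factorization-through-$H^2$ argument uniformly in the data so that boundedness$\,\times\,$(compact embedding) genuinely yields a compact operator.
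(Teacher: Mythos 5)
Your part (a) is correct and is essentially the paper's own argument: since the surface carrying the data is disjoint from the surface on which the solution is restricted, the solution is smooth near the latter, so the restriction map is bounded into a higher-order Sobolev space and compactness follows from a compact embedding (the paper maps into $H^{1}(\partial\Omega_1)\times H^{1}(\partial B_2)$ and invokes Rellich; your factorization through $H^{2}$ of a collar is the same mechanism), and your disjointness bookkeeping matches Assumption 1.1.

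Part (b) has the right architecture (dual densities $\to$ auxiliary radiating function $w$ $\to$ pairing identity $\to$ vanishing on $\partial B_1\cup\partial B_3$ $\to$ unique continuation $\to$ jump relations), but there is a genuine gap in your choice of $w$. If $w$ is a free-space layer potential of $(\psi_1,\psi_2)$ over $\partial\Omega_1\cup\partial B_2$, then Green's identity applied in the exterior of $\Omega_1\cup B_2$ and in the annuli $\Omega_1\setminus\overline{B_1}$, $B_2\setminus\overline{B_3}$ reduces the annihilation condition to $\int_{\partial B_1\cup\partial B_3}\bigl(v_3\,\partial_{\nu}w-w\,\partial_{\nu}v_3\bigr)\,ds=0$, and inserting the impedance condition $\partial_{\nu}v_3+i\lambda_0 v_3=f_3$ turns this into $\int_{\partial B_1\cup\partial B_3}v_3\bigl(\partial_{\nu}w+i\lambda_0 w\bigr)\,ds-\int_{\partial B_1\cup\partial B_3}f_3\,w\,ds=0$. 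A free-space potential does not satisfy $\partial_{\nu}w+i\lambda_0 w=0$ on $\partial B_1\cup\partial B_3$, so the first term survives, and since $v_3$ is itself a function of $f_3$ you cannot conclude $w=0$ there. The paper avoids this by taking $w$ to be the solution of a transmission problem rather than a potential: Helmholtz off the surfaces, continuity of $w$ and prescribed jump $\overline{\phi},\overline{\psi}$ of $\partial_{\nu}w$ across $\partial\Omega_1,\partial B_2$ (in particular no double-layer part, which would destroy the continuity you need later), and the homogeneous impedance condition on $\partial B_1\cup\partial B_3$, see (\ref{3.16})--(\ref{3.20}); with that choice the identity collapses to $\int_{\partial B_1\cup\partial B_3}f_3\,w\,ds=0$, whence $w=0$ and, by the impedance condition, $\partial_{\nu}w=0$ on $\partial B_1\cup\partial B_3$. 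Two further corrections to your sketch: Holmgren propagates $w=0$ from the inner surfaces only into $\Omega_1\setminus\overline{B_1}$ and $B_2\setminus\overline{B_3}$ --- it cannot cross $\partial\Omega_1$ or $\partial B_2$, where $\partial_{\nu}w$ jumps; you then need $w_+=w_-=0$ on those surfaces and the uniqueness of the exterior Dirichlet problem (not unique continuation ``from the unbounded component'') to get $w=0$ outside, after which the normal-derivative jumps yield $\phi=0$ and $\psi=0$.
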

\begin{proof}[Proof]
(a) The mappings 
$R_1-\left(
    \begin{array}{cc}
      I & 0 \\
      0 & 0
    \end{array}
  \right)$ : $H^{1/2}(\partial \Omega_1) \times H^{-1/2}(\partial \Omega_2) \to H^{1}(\partial \Omega_1) \times H^{1}(\partial B_2)$, $R_2-P_2$ : $H^{1/2}(\partial B_2) \to H^{1}(\partial \Omega_1) \times H^{1}(\partial B_2)$, and $R_3$ : $H^{-1/2}(\partial B_1 \cup \partial B_3) \to H^{1}(\partial \Omega_1) \times H^{1}(\partial B_2)$ are bounded since they are given by $\left(
     \begin{array}{cc}
      f_1  \\
      g_1 
    \end{array}
    \right)
    \mapsto
    \left(
     \begin{array}{cc}
      0  \\
      v_1\bigl|_{\partial B_2}
    \end{array}
    \right)$, 
    $f_2
  \mapsto \left(
    \begin{array}{cc}
      v_2\bigl|_{\partial \Omega_1}  \\
      0
    \end{array}
  \right)$, and $f_3 \mapsto \left(
    \begin{array}{cc}
      v_3\bigl|_{\partial \Omega_1}  \\
      v_3\bigl|_{\partial B_2}
    \end{array}
  \right)$,
     respectively. By Rellich theorem, they are compact.\par

(b) Let $\phi \in H^{-1/2}(\partial \Omega_1)$ and $\psi \in H^{-1/2}(\partial B_2)$. Assume that $R_3^{*}\left(
    \begin{array}{cc}
      \phi \\
      \psi
    \end{array}
  \right)=0$.
Using the same argument as done in Theorem 2.5 in \cite{Liu and Zhang and Hu}, one knows the existence of a radiating solution $w$ such that
\begin{equation}
\Delta w+k^2 w=0 \ \mathrm{in} \ \mathbb{R}^3\setminus  \overline{\Omega_1 \cup B_2}, \label{3.16}
\end{equation}
\begin{equation}
\Delta w+k^2w=0 \ \mathrm{in} \ \Omega_1 \setminus \overline{B_1}, \ \mathrm{in} \ B_2 \setminus \overline{B_3},\label{3.17}
\end{equation}

\begin{equation}
w_+-w_-=0, \ \ \ \ \  \frac{\partial w_+}{\partial \nu_{\Omega_1}}-\frac{\partial w_-}{\partial \nu_{\Omega_1}}=\overline{\phi} \ \ \mathrm{on} \ \partial{\Omega_1},  \label{3.18}
\end{equation}
\begin{equation}
w_+-w_-=0, \ \ \ \ \  \frac{\partial w_+}{\partial \nu_{B_2}}-\frac{\partial w_-}{\partial \nu_{B_2}}=\overline{\psi} \ \ \mathrm{on} \ \partial{B_2},  \label{3.19}
\end{equation}

\begin{equation}
\frac{\partial w}{\partial \nu_{B_1}}+i\lambda_0 w=0 \ \mathrm{on} \ \partial{B_1}, \ \ \ \ \ \ \frac{\partial w}{\partial \nu_{B_3}}+i\lambda_0 w=0 \ \mathrm{on} \ \partial{B_3}, \label{3.20}
\end{equation}
where the subscripts + and -- denote the trace from the exterior and interior, respectively. (See Figure 3).

\begin{figure}[h]
\centering
\begin{tikzpicture}[scale=0.8]
\draw [very thick](6,2) circle [x radius=2cm, y radius=1.4cm, rotate=0];
\draw [very thick](1.5,2) circle [x radius=1.4cm, y radius=10mm, rotate=100];
\node (A) at (1.6,1.2) [above] {{\large $\Omega_1$}} ;
\node (C) at (5.3,1) [above] {{\large $B_2$}} ;
\fill [black!20](1.5,2.7) circle (0.5);
\node (E) at (1.5,2.3) [above] {{\large $B_1$}} ;
\fill [black!20](6.5,2.5) circle (0.5);
\node (E) at (6.55,2.1) [above] {{\large $B_3$}} ;
\end{tikzpicture}
\caption{}
\end{figure}

By using the boundary conditions (\ref{3.11}), (\ref{3.18}), (\ref{3.19}), (\ref{3.20}), and Green's theorem, we have
\begin{eqnarray}
0&=&\Bigl \langle f_3 , R_3^{*}\left(
    \begin{array}{cc}
      \phi \\
      \psi
    \end{array}
  \right)\Bigr \rangle
= \Bigl \langle \left(
    \begin{array}{cc}
      v_3\bigl|_{\partial \Omega_1}  \\
      v_3\bigl|_{\partial B_2}
    \end{array}
  \right) , \left(
    \begin{array}{cc}
      \phi \\
      \psi
    \end{array}
  \right)\Bigr \rangle
\nonumber\\
&=& 
\int_{\partial \Omega_1} v_3 \overline{\phi} ds+\int_{\partial B_2} v_3 \overline{\psi} ds
\nonumber\\
&=&
\int_{\partial \Omega_1 \cup \partial B_2} v_3 \Bigl( \frac{\partial w_+}{\partial \nu}-\frac{\partial w_-}{\partial \nu} \Bigr) ds - \int_{\partial \Omega_1 \cup \partial B_2} \frac{\partial v_3}{\partial \nu} ( w_+ - w_- ) ds
\nonumber\\
&=&
\int_{\partial \Omega_1 \cup \partial B_2} \biggl[ \frac{\partial v_3}{\partial \nu} w_-  - v_3 \frac{\partial w_-}{\partial \nu} \biggr] ds - \int_{\partial \Omega_1 \cup \partial B_2} \biggl[  \frac{\partial v_3}{\partial \nu} w_+  - v_3 \frac{\partial w_+}{\partial \nu} \biggr] ds
\nonumber\\
&=&
\int_{\partial B_1} \biggl[ \frac{\partial v_3}{\partial \nu_{B_1}} w - v_3 \frac{\partial w}{\partial \nu_{B_1}} \biggr] ds + \int_{\partial B_3} \biggl[  \frac{\partial v_3}{\partial \nu_{B_3}} w - v_3 \frac{\partial w}{\partial \nu_{B_3}} \biggr] ds  \nonumber\\
&=& \int_{\partial B_1 \cup \partial B_3} f_3 wds,\label{3.21}
\end{eqnarray}
which proves that $w=0 \ \mathrm{in} \ \partial B_1 \cup \partial B_3$. Holmgren's uniqueness theorem (See e.g., Theorem 2.3 in \cite{Colton and Kress}) implies that $w$ vanishes in $\Omega_1 \setminus \overline{B_1}$ and $B_2 \setminus \overline{B_3}$. Equations ($\ref{3.18}$) and ($\ref{3.19}$) yield $w_+=0 \ \mathrm{on} \ \partial{\Omega_1} \cup \partial{B_2}$ which implies that $w$ vanishes also outside of $\Omega_1$ and $B_2$ by the uniqueness of the exterior Dirichlet problem. Therefore, equations ($\ref{3.18}$) and ($\ref{3.19}$) yield $\phi=0$ and $\psi=0$.
\end{proof}
By Lemma 3.2, the middle operator $T$ of ($\ref{3.13}$) has the following properties:
\begin{lem}
\begin{description}
\item[(a)]
$\mathrm{Re}\bigl(\mathrm{e}^{i\pi}T \bigr)$ has the form $\mathrm{Re}\bigl(\mathrm{e}^{i\pi}T \bigr)=C+K$ with some self-adjoint and positive coercive operator $C$ and some compact operator $K$.
\item[(b)]$
\mathrm{Im} \langle \varphi, T \varphi\rangle<0 \ for \ all \ \varphi \in H^{-1/2}(\partial \Omega_1) \times H^{-1/2}(\partial B_2) \ with \ \varphi \neq 0
$.
\end{description}
\end{lem}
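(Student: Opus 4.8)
The plan is to read off both statements directly from the factorization
\[
T=-R_1T^{Mix\,*}_{\Omega_1,\Omega_2}R_1^{*}-R_2S^{*}_{B_2}R_2^{*}-R_3T^{Imp\,*}_{B_1\cup B_3,i\lambda_0}R_3^{*}
\]
established in (3.13), by substituting the structural facts already at hand. From Lemma 3.2, $R_1-\left(\begin{smallmatrix}I&0\\0&0\end{smallmatrix}\right)$, $R_2-P_2$ and $R_3$ are compact and $R_3^{*}$ is injective; from Lemma 2.1(b) and Lemma 2.2(b), $T^{Mix}_{\Omega_1,\Omega_2}=\left(\begin{smallmatrix}S_{\Omega_1,i}&0\\0&N_{\Omega_2,i}\end{smallmatrix}\right)+K_1$, $S_{B_2}=S_{B_2,i}+K_2$ and $T^{Imp}_{B_1\cup B_3,i\lambda_0}=N_{B_1\cup B_3,i}+K_3$ with $K_1,K_2,K_3$ compact; and from Lemma 2.1(c),(d) and Lemma 2.2(c) we have the sign information on these middle operators. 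With these in place, (a) is a compact-perturbation computation and (b) a term-by-term sign estimate.

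For (a) I would argue as follows. Since $R_3$ is compact, $R_3T^{Imp\,*}_{B_1\cup B_3,i\lambda_0}R_3^{*}$ is compact; and since a product of bounded operators with at least one compact factor is compact, replacing $R_1$ by $\left(\begin{smallmatrix}I&0\\0&0\end{smallmatrix}\right)$ and $R_2$ by $P_2$ changes $-T$ only by a compact operator. Substituting in addition the decompositions of $T^{Mix}_{\Omega_1,\Omega_2}$ and $S_{B_2}$, and using that $\left(\begin{smallmatrix}I&0\\0&0\end{smallmatrix}\right)$ keeps only the $\partial\Omega_1$-component — so that the $N_{\Omega_2,i}$-block is absorbed into the compact remainder — while $P_2$ keeps only the $\partial B_2$-component, I obtain $-T=\left(\begin{smallmatrix}S_{\Omega_1,i}&0\\0&S_{B_2,i}\end{smallmatrix}\right)+K'$ with $K'$ compact. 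As $S_{\Omega_1,i}$ and $S_{B_2,i}$ are self-adjoint, this block operator equals its own real part, and the real part of a compact operator is compact, so $\mathrm{Re}(\mathrm{e}^{i\pi}T)=-\mathrm{Re}(T)=\left(\begin{smallmatrix}S_{\Omega_1,i}&0\\0&S_{B_2,i}\end{smallmatrix}\right)+K$ with $K:=\mathrm{Re}(K')$ compact. Finally $C:=\left(\begin{smallmatrix}S_{\Omega_1,i}&0\\0&S_{B_2,i}\end{smallmatrix}\right)$ is self-adjoint, and it is positive coercive on $H^{-1/2}(\partial\Omega_1)\times H^{-1/2}(\partial B_2)$ because both $S_{\Omega_1,i}$ and $S_{B_2,i}$ are, with coercivity constant the smaller of the two. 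Crucially, this imposes no condition on $k$, since $S_{\Omega_1,i}$ and $S_{B_2,i}$ are the single-layer operators at wave number $k=i$.

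For (b) I would fix $\varphi\in H^{-1/2}(\partial\Omega_1)\times H^{-1/2}(\partial B_2)$ with $\varphi\neq 0$, set $\psi_1:=R_1^{*}\varphi$, $\psi_2:=R_2^{*}\varphi$, $\psi_3:=R_3^{*}\varphi$, and move the outer adjoints $R_i^{*}$ into the pairing in the formula for $T$, which gives
\[
\mathrm{Im}\langle\varphi,T\varphi\rangle=-\mathrm{Im}\langle T^{Mix}_{\Omega_1,\Omega_2}\psi_1,\psi_1\rangle+\mathrm{Im}\langle\psi_2,S_{B_2}\psi_2\rangle-\mathrm{Im}\langle T^{Imp}_{B_1\cup B_3,i\lambda_0}\psi_3,\psi_3\rangle .
\]
By Lemma 2.2(c) the first term is $\le 0$, by Lemma 2.1(c) the second is $\le 0$, and by Lemma 2.1(d) the third is $\le 0$ with equality exactly when $\psi_3=0$. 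Since $R_3^{*}$ is injective and $\varphi\neq 0$, we have $\psi_3=R_3^{*}\varphi\neq 0$, so the third term is strictly negative, hence $\mathrm{Im}\langle\varphi,T\varphi\rangle<0$, which is the assertion.

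The compact-perturbation bookkeeping in (a) is routine. The delicate point is the sign accounting in (b): one must track carefully which argument of the duality pairing each middle operator occupies, since the relevant lemmas of Section 2 are not stated with a uniform convention, and one must observe that the strict negativity at the end comes solely from the impedance term — via the strict positivity in Lemma 2.1(d) combined with the injectivity of $R_3^{*}$ — whereas the mixed-boundary and Dirichlet terms contribute only $\le 0$.
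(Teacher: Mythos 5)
Your proposal is correct and follows essentially the same route as the paper: in (a) you replace $R_1,R_2$ by their principal parts and use the decompositions of Lemmas 2.1(b), 2.2(b) together with the compactness of $R_3$ to reduce $\mathrm{Re}(\mathrm{e}^{i\pi}T)$ to $\mathrm{diag}(S_{\Omega_1,i},S_{B_2,i})$ plus a compact remainder, and in (b) you obtain the same term-by-term sign identity, with the strict inequality coming solely from Lemma 2.1(d) combined with the injectivity of $R_3^{*}$ from Lemma 3.2(b). This matches the paper's proof of Lemma 3.3 in both structure and detail.
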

\begin{proof}[Proof]
(a) By Lemma 2.1 (b), Lemma 2.2 (b), and Lemma 3.2 (a),
\begin{eqnarray}
\mathrm{Re}\bigl(\mathrm{e}^{i\pi}T \bigr)
&=&
\mathrm{Re}\Bigl(R_1T^{Mix\ *}_{\Omega_1,\Omega_2}R^{*}_1+R_2S^{*}_{B_2}R^{*}_2+R_3T^{Imp\ *}_{B_1 \cup B_3,i\lambda_0}R^{*}_3  \Bigr)
\nonumber\\
&=&
R_1 
\left(
    \begin{array}{cc}
      S_{\Omega_1,i} & 0 \\
      0 & N_{\Omega_2,i}
    \end{array}
\right)
R^{*}_1+R_2S_{B_2,i} R^{*}_2+K
\nonumber\\
&=&
\left(
    \begin{array}{cc}
      I & 0 \\
      0 & 0
    \end{array}
\right)
\left(
    \begin{array}{cc}
      S_{\Omega_1,i} & 0 \\
      0 & N_{\Omega_2,i}
    \end{array}
\right)
\left(
    \begin{array}{cc}
      I & 0 \\
      0 & 0
    \end{array}
\right)
+
P_2 S_{B_2,i}P^{*}_2
+K'
\nonumber\\
&=&
\left(
    \begin{array}{cc}
      S_{\Omega_1,i} & 0 \\
      0 & S_{B_2,i}
    \end{array}
\right)+K',\label{3.22}
\end{eqnarray}
where $K$ and $K'$ are some compact operators. Since the boundary integral operators $S_{\Omega_1, i}$ and $S_{B_2, i}$ are self-adjoint and positive coercive, (a) holds.
\par
(b) By Lemma 2.1 (c) (d), Lemma 2.2 (c), and Lemma 3.2 (b), especially, by the strictly positivity of the operator $\mathrm{Im}T^{Imp}_{B_1 \cup B_3,i\lambda_0}$, and the injectivity of $R^{*}_3$, for all $\varphi \in H^{-1/2}(\partial \Omega_1) \times H^{-1/2}(\partial B_2)$ with $\varphi \neq 0$, we have
\begin{eqnarray}
\mathrm{Im} \langle \varphi, T \varphi\rangle
&=&-\mathrm{Im} \langle T^{Mix}_{\Omega_1, \Omega_2} R^{*}_1 \varphi,R^{*}_1 \varphi \rangle
+\mathrm{Im}\langle R^{*}_2 \varphi, S_{B_2}R^{*}_2 \varphi \rangle
\nonumber\\
&&
-\mathrm{Im}\langle T^{Imp}_{B_1 \cup B_3,i\lambda_0} R^{*}_3 \varphi,R^{*}_3 \varphi \rangle
<0. \label{3.23}
\end{eqnarray}
\end{proof}
Therefore, by Lemma 3.3, we can apply Theorem 2.4 to this case. From Lemma 3.1 (b), and applying Theorem 2.4, we obtain Theorem 1.2.
\begin{rem}
Unknown obstacle $\Omega_2$ may consist of finitely many connected components whose closures are mutually disjoint. Furthermore, the boundary condition on $\Omega_2$ can not be only Neumann but also Dirichlet, impedance, and not only impenetrable obstacles but also penetrable mediums, and their mixed situations by the same argument in Theorem 1.2. In all cases, we can choose arbitrary wave numbers $k>0$.
\end{rem}

\begin{rem}
If we assume that $k^2$ is not a Dirichlet eigenvalue of $-\Delta$ in artificial domains $B_1$, $B_2$, then we do not need to take an additional domain $B_3$. In such a case, we only use $F^{Dir}_{B_1 \cup B_2}$ as artificial far field operators since $F^{Dir}_{B_1 \cup B_2}$ has a role to keep the strictly positivity of the imaginary part of the middle operator of $F$. (See Lemma 2.1 (c).) That is, we can give the following characterization by the same argument in Theorem 1.2:
\end{rem}

\begin{thm}\label{thm1.2} 
In addition to $\mathrm{Assumption \ 1.1}$, we assume that $k^2$ is not a Dirichlet eigenvalue of $-\Delta$ in $B_1$, $B_2$. Take a positive number $\lambda_0>0$. Then, for $z \in \mathbb{R}^3 \setminus  \overline{B_2}$
\begin{equation}
z \in \Omega_1
\Longleftrightarrow
\sum_{n=1}^{\infty}\frac{|(\phi_z,\varphi_n)_{L^2(\mathbb{S}^{2})}|^2}{\lambda_n} < \infty, \label{1.12}
\end{equation}
where $(\lambda_n,\varphi_n)$ is a complete eigensystem of $F_{\#}$ given by
\begin{equation}
F_{\#}:=\bigl|\mathrm{Re}F\bigr|+\bigl|\mathrm{Im}F\bigr|,\label{1.13}
\end{equation}
where $F:=F^{Mix}_{\Omega_1,\Omega_2}+F^{Dir}_{B_1 \cup B_2}$. Here, the function $\phi_z$ is given by $($$\ref{1.11}$$)$.
\end{thm}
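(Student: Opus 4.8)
The plan is to mimic the proof of Theorem 1.2, but with the simpler auxiliary configuration in which we drop $B_3$ and use the single Dirichlet far field operator $F^{Dir}_{B_1 \cup B_2}$ in place of $F^{Dir}_{B_2}+F^{Imp}_{B_1\cup B_3,i\lambda_0}$. First I would introduce the range operators adapted to this setting: an operator $R_1 : H^{1/2}(\partial\Omega_1)\times H^{-1/2}(\partial\Omega_2)\to H^{1/2}(\partial\Omega_1)\times H^{1/2}(\partial(B_1\cup B_2))$ sending $(f_1,g_1)$ to $(f_1, v_1|_{\partial(B_1\cup B_2)})$ with $v_1$ the radiating solution of \eqref{3.2}--\eqref{3.3}, and an operator $R_2 : H^{1/2}(\partial(B_1\cup B_2))\to H^{1/2}(\partial\Omega_1)\times H^{1/2}(\partial(B_1\cup B_2))$ sending $f_2$ to $(v_2|_{\partial\Omega_1}, f_2)$ with $v_2$ the radiating solution of \eqref{3.6}--\eqref{3.7} but exterior to $B_1\cup B_2$. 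As in \eqref{3.4} and \eqref{3.8} one checks directly from the definitions that $G^{Mix}_{\Omega_1,\Omega_2}=G^{Dir}_{\Omega_1,B_1\cup B_2}R_1$ and $G^{Dir}_{B_1\cup B_2}=G^{Dir}_{\Omega_1,B_1\cup B_2}R_2$, where $G^{Dir}_{\Omega_1,B_1\cup B_2}$ is the data-to-pattern operator for the pure Dirichlet condition on $\Omega_1\cup B_1\cup B_2$. Combining these with the factorizations of Lemma 2.1(a) and Lemma 2.2(a) gives the factorization $F = F^{Mix}_{\Omega_1,\Omega_2}+F^{Dir}_{B_1\cup B_2} = G^{Dir}_{\Omega_1,B_1\cup B_2}\, T\, G^{Dir\,*}_{\Omega_1,B_1\cup B_2}$ with $T := -R_1 T^{Mix\,*}_{\Omega_1,\Omega_2}R_1^{*} - R_2 S^{*}_{B_1\cup B_2}R_2^{*}$.

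Next I would record the analogue of Lemma 3.1: by the argument of Theorems 1.12 and 1.13 in \cite{Kirsch and Grinberg}, $G^{Dir}_{\Omega_1,B_1\cup B_2}$ is compact with dense range in $L^2(\mathbb{S}^2)$, and for $z\in\mathbb{R}^3\setminus\overline{B_2}$ one has $z\in\Omega_1 \iff \phi_z\in\mathrm{Ran}(G^{Dir}_{\Omega_1,B_1\cup B_2})$; the point $z$ is excluded from $\overline{B_2}$ (hence in particular from $\overline{B_1}$, since $\overline{B_1}\subset\Omega_1$ and $\overline{\Omega_1}\cap\overline{B_2}=\emptyset$ forces $B_1$ and $B_2$ disjoint) so the usual singularity/range argument goes through. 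Then I would prove the analogue of Lemma 3.2(a): $R_1 - \bigl(\begin{smallmatrix} I & 0\\ 0 & 0\end{smallmatrix}\bigr)$ and $R_2 - P_2$ (with $P_2 h = (0,h)^{\mathsf T}$) are compact, by the same Rellich-theorem smoothing argument, since each factors through a map into $H^1(\partial\Omega_1)\times H^1(\partial(B_1\cup B_2))$.

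With these in hand the middle operator $T$ is analysed exactly as in Lemma 3.3. For (a): using Lemma 2.1(b), Lemma 2.2(b) and the compactness just established, $\mathrm{Re}(e^{i\pi}T) = R_1\bigl(\begin{smallmatrix}S_{\Omega_1,i}&0\\0&N_{\Omega_2,i}\end{smallmatrix}\bigr)R_1^{*} + R_2 S_{B_1\cup B_2,i}R_2^{*} + K = \bigl(\begin{smallmatrix}S_{\Omega_1,i}&0\\0&S_{B_1\cup B_2,i}\end{smallmatrix}\bigr) + K'$ modulo compacts, and the right-hand diagonal is self-adjoint and positive coercive. For (b): $\mathrm{Im}\langle\varphi,T\varphi\rangle = -\mathrm{Im}\langle T^{Mix}_{\Omega_1,\Omega_2}R_1^{*}\varphi,R_1^{*}\varphi\rangle + \mathrm{Im}\langle R_2^{*}\varphi, S_{B_1\cup B_2}R_2^{*}\varphi\rangle$; the first term is $\le 0$ by Lemma 2.2(c), and the second is $< 0$ whenever $R_2^{*}\varphi\neq 0$ by Lemma 2.1(c), using the hypothesis that $k^2$ is not a Dirichlet eigenvalue of $-\Delta$ in $B_1$ or $B_2$. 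Finally, $R_2^{*}$ is injective: this is where I expect the only real work, and it is handled as in Lemma 3.2(b) — if $R_2^{*}(\phi_1,\phi_2)^{\mathsf T}=0$ one produces, via the argument of Theorem 2.5 in \cite{Liu and Zhang and Hu}, a radiating $w$ solving the Helmholtz equation in $\mathbb{R}^3\setminus\overline{\Omega_1\cup B_1\cup B_2}$ with the jump relations encoding $\phi_1,\phi_2$ across $\partial\Omega_1$ and $\partial(B_1\cup B_2)$; pairing against $v_2$ and using Green's theorem shows $w$ vanishes on the relevant boundaries, and since $B_1\cup B_2$ has no interior cavity to worry about here, Holmgren plus uniqueness for the exterior Dirichlet problem forces $w\equiv 0$ and hence $\phi_1=\phi_2=0$. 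Thus $\mathrm{Im}\langle\varphi,T\varphi\rangle<0$ for all $\varphi\neq 0$. Then Theorem 2.4 applies with $t=\pi$, giving $\mathrm{Ran}(G^{Dir}_{\Omega_1,B_1\cup B_2})=\mathrm{Ran}(F_\#^{1/2})$, and combining with the range characterization of $\phi_z$ and Picard's criterion ($\phi_z\in\mathrm{Ran}(F_\#^{1/2})\iff\sum_n |(\phi_z,\varphi_n)|^2/\lambda_n<\infty$) yields \eqref{1.12}. The main obstacle, as in the original proof, is the injectivity of $R_2^{*}$; everything else is a direct transcription of Section 3 with the pair $(B_2,\, B_1\cup B_3)$ replaced by the single set $B_1\cup B_2$.
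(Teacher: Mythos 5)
Your overall strategy is exactly the one the paper intends (cf.\ Remark 3.5): drop $B_3$, replace the impedance operator by $F^{Dir}_{B_1\cup B_2}$, and recover the strict sign of the imaginary part of the middle operator from the second half of Lemma 2.1 (c) together with the hypothesis that $k^2$ is not a Dirichlet eigenvalue in $B_1$ and $B_2$. However, your setup mishandles $B_1$. By Assumption 1.1 we have $\overline{B_1}\subset\Omega_1$, so $B_1$ is buried inside the Dirichlet obstacle and is \emph{not} an exterior auxiliary domain on a par with $B_2$. This causes three concrete problems. First, your $R_1(f_1,g_1)=(f_1,\,v_1|_{\partial(B_1\cup B_2)})$ is not defined: $v_1$ solves (3.2)--(3.3) only in $\mathbb{R}^3\setminus\overline{\Omega}$, while $\partial B_1\subset\Omega_1$, so the trace $v_1|_{\partial B_1}$ does not exist. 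Second, the proposed outer operator $G^{Dir}_{\Omega_1,B_1\cup B_2}$ is degenerate: the exterior of $\Omega_1\cup B_1\cup B_2$ coincides with the exterior of $\Omega_1\cup B_2$ and never sees $\partial B_1$, so Dirichlet data on $\partial B_1$ cannot be imposed there. Third, and most seriously, with your $R_2f_2=(v_2|_{\partial\Omega_1},\,f_2)$ the adjoint is $R_2^{*}(\phi,\psi)=A^{*}\phi+\psi$, where $Af_2:=v_2|_{\partial\Omega_1}$; its kernel contains every pair $(\phi,-A^{*}\phi)$ with $\phi\in H^{-1/2}(\partial\Omega_1)$, so $R_2^{*}$ is \emph{not} injective, and the strict inequality $\mathrm{Im}\langle\varphi,T\varphi\rangle<0$ for all $\varphi\neq0$ collapses precisely at the step you yourself identify as the only real work.

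The repair is to keep the outer operator $G^{Dir}_{\Omega_1,B_2}$ of Section 3 unchanged (so $R_1$ is exactly (3.1) and Lemma 3.1 applies verbatim) and to absorb the whole of $B_1\cup B_2$ into the second range operator: define $R_2:H^{1/2}(\partial B_1\cup\partial B_2)\to H^{1/2}(\partial\Omega_1)\times H^{1/2}(\partial B_2)$ by $R_2f_2:=(v_2|_{\partial\Omega_1},\,v_2|_{\partial B_2})$, where $v_2$ is the radiating solution of the exterior Dirichlet problem for the obstacle $B_1\cup B_2$ with data $f_2$, and note that $v_2|_{\partial B_2}=f_2|_{\partial B_2}$. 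Then $G^{Dir}_{B_1\cup B_2}=G^{Dir}_{\Omega_1,B_2}R_2$ by uniqueness of the exterior Dirichlet problem outside $\Omega_1\cup B_2$; the operator $R_2-P_2$ with $P_2f_2:=(0,\,f_2|_{\partial B_2})$ is compact because $\partial\Omega_1$ has positive distance from $\overline{B_1}\cup\overline{B_2}$; and the computation (3.22) again produces $\mathrm{diag}(S_{\Omega_1,i},S_{B_2,i})$ modulo compacts, since the cross terms of $S_{B_1\cup B_2,i}$ between $\partial B_1$ and $\partial B_2$ are smoothing. The non-eigenvalue hypothesis then gives $\mathrm{Im}\langle R_2^{*}\varphi,S_{B_1\cup B_2}R_2^{*}\varphi\rangle<0$ whenever $R_2^{*}\varphi\neq0$ via Lemma 2.1 (c), and the remaining genuine content is the injectivity of \emph{this} $R_2^{*}$. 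That is proved by the duality argument of Lemma 3.2 (b), but it is not a verbatim transcription: the intermediate function $w$ now solves the Helmholtz equation in all of $\Omega_1$ and all of $B_2$ (there is no impedance condition on $\partial B_1$, $\partial B_3$ to exploit), and one must instead pair the Cauchy data of $w$ and $v_2$ on $\partial B_1\cup\partial B_2$ and use the arbitrariness of the Dirichlet data $f_2$ there before invoking Holmgren and exterior uniqueness. This step deserves a full write-up rather than the one-line appeal you give.
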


\begin{rem}
We can also give the characterization of the Neumann part $\Omega_2$ if we assume $\overline{\Omega_1}\subset B_1$, $\overline{B_2}\subset \Omega_2$, $\overline{B_1} \cap \overline{\Omega_2}=\emptyset$ by the same argument in Theorem 1.2 $($See $\mathrm{Figure\ 4}$$)$.
\end{rem}

\begin{figure}[h]
\centering
\begin{tikzpicture}[scale=0.7]
\fill [black!20](6,2) circle [x radius=1.4cm, y radius=1.1cm, rotate=130];
\fill [black!20](1.5,2) circle [x radius=1.4cm, y radius=10mm, rotate=100];
\node (A) at (1.5,1.7) [above] {{\large$\Omega_1$}} ;
\node (B) at (6,1.2) [above] {{\large$\Omega_2$}} ;
\node (C) at (5.7,2.3) [above] {{\large $B_2$}} ;
\node (H) at (5.9,0.8) [above] {\large Neumann} ;
\node (H) at (1.5,1.2) [above] {\large Dirichlet} ;
\draw [very thick](1.5,2) circle  [x radius=2cm, y radius=15mm, rotate=50];;
\draw [very thick](5.7,2.7) circle [x radius=0.5cm, y radius=0.4cm, rotate=0];
\node (E) at (0,2.9) [above] {{\large $B_1$}} ;
\end{tikzpicture}
\caption{}
\end{figure}



\section{The second case}
In Section 4, we prove Theorem 1.4. Let Assumption 1.3 hold. We define $G^{Mix}_{\Omega_10,B_2}: L^{2}(\Omega_1) \times H^{1/2}(\partial B_2) \to L^{2}(\mathbb{S}^{2})$ by
\begin{equation}
G^{Mix}_{\Omega_10,B_2}
\left(
    \begin{array}{cc}
      f \\
      g 
    \end{array}
  \right)
  :=v^{\infty},
\label{4.1}
\end{equation}
where $v^{\infty}$ is the far field pattern of a radiating solution $v$ such that   
\begin{equation}
\Delta v+k^2v=-k^2 \frac{q}{\sqrt{|q|}}f \ \mathrm{in} \ \mathbb{R}^3\setminus  \overline{B_2}, \label{4.2}
\end{equation}
\begin{equation}
v=g \ \mathrm{on} \ \partial{B_2}. \label{4.3}
\end{equation}
Note that we extend $q$ by zero outside $\Omega_1$.
Next, we define $R_1:L^{2}(\Omega_1) \times H^{1/2}(\partial \Omega_2) \to L^{2}(\Omega_1) \times H^{1/2}(\partial B_2)$ by
\begin{equation}
R_1
\left(
    \begin{array}{cc}
      f_1 \\
      g_1 
    \end{array}
  \right)
  :=\left(
    \begin{array}{cc}
      f_1+\sqrt{|q|}v_1 \\
      v_1\bigl|_{\partial B_2}
    \end{array}
  \right),
\label{4.4}
\end{equation}
where $v_1$ is a radiating solution such that   
\begin{equation}
\Delta v_1+k^2(1+q)v_1=-k^2 \frac{q}{\sqrt{|q|}}f_1 \ \mathrm{in} \ \mathbb{R}^3\setminus  \overline{\Omega_2}, \label{4.5}
\end{equation}
\begin{equation}
v_1=-g_1 \ \mathrm{on} \ \partial{\Omega_2}. \label{4.6}
\end{equation}
Then, from the definition of $R_1$, we obtain
\begin{equation}
G^{Mix}_{\Omega_1q,\Omega_2}=G^{Mix}_{\Omega_10,B_2}R_1.\label{4.7}
\end{equation}
We define $R_2:H^{1/2}(\partial B_2) \to L^{2}(\Omega_1) \times H^{1/2}(\partial B_2)$ by
\begin{equation}
R_2
f_2:=\left(
    \begin{array}{cc}
      0 \\
      f_2
    \end{array}
  \right).
\label{4.8}
\end{equation}
Then, from the definition of $R_2$, we obtain
\begin{equation}
G^{Dir}_{B_2}=G^{Mix}_{\Omega_10,B_2}R_2.\label{4.9}
\end{equation}
Here, take a positive number  $\lambda_0>0$, and a bounded domain $B_3$ with $\overline{B_3} \subset B_2$. We define $R_3:H^{-1/2}(\partial B_3) \to H^{1/2}(\partial B_2)$ by
\begin{equation}
  R_3 f_3:=v_3\bigl|_{\partial B_2},\label{4.10}
\end{equation}
where $v_3$ is a radiating solution such that 
\begin{equation}
\Delta v_3+k^2v_3=0 \ \mathrm{in} \ \mathbb{R}^3\setminus  \overline{B_3}, \label{4.11}
\end{equation}
\begin{equation}
\frac{\partial v_3}{\partial \nu_{B_3}}+i\lambda_0 v_3=f_3 \ \mathrm{on} \ \partial B_3. \label{4.12}
\end{equation}
Then, from the definition of $R_3$, and (\ref{4.9}), we obtain
\begin{equation}
G^{Imp}_{B_3,i\lambda_0}=G^{Dir}_{B_2}R_3=G^{Mix}_{\Omega_10,B_2}R_2R_3.\label{4.13}
\end{equation}
By (\ref{4.7}), (\ref{4.9}), (\ref{4.13}), and the factorization of the far field operator in Section 2, we have
\begin{equation}
F^{Mix}_{\Omega_1q,\Omega_2}+F^{Dir}_{B_2}+F^{Imp}_{B_3,i\lambda_0}
=G^{Mix}_{\Omega_10,B_2}MG^{Mix\ *}_{\Omega_10,B_2}, \label{4.14}
\end{equation}
where $M:=\Bigl[R_1M^{Mix\ *}_{\Omega_1q,\Omega_2}R^{*}_1-R_2S^{*}_{B_2}R^{*}_2-R_2R_3T^{Imp\ *}_{B_3,i\lambda_0}R^{*}_3R^{*}_2 \Bigr]$.\par
The following properties are given by the same argument in Theorem 3.2 (c) in \cite{Kirsch and Liu2}:

\begin{lem}
\begin{description}
\item[(a)]
The operator $G^{Mix}_{\Omega_10,B_2}:L^{2}(\Omega_1) \times H^{1/2}(\partial B_2) \to L^{2}(\mathbb{S}^{2})$ is compact with dense range in $L^{2}(\mathbb{S}^{2})$.
\item[(b)]
For $z \in \mathbb{R}^3 \setminus  \overline{B_2}$ 
\begin{equation}
z \in \Omega_1
\Longleftrightarrow
\phi_z \in \mathrm{Ran}(G^{Mix}_{\Omega_10,B_2}), \label{4.15}
\end{equation}
where the function $\phi_z$ is given by $($$\ref{1.11}$$)$.
\end{description}
\end{lem}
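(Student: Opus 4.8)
The plan is to prove Lemma 4.1 in close analogy with Lemma 3.1, but now the auxiliary operator $G^{Mix}_{\Omega_10,B_2}$ involves the volume-potential term coming from $q$ on $\Omega_1$ together with the Dirichlet-layer behavior on $\partial B_2$. For part (a), I would first observe that $G^{Mix}_{\Omega_10,B_2}$ factors through the solution operator of the (well-posed) problem \eqref{4.2}--\eqref{4.3}: given $(f,g)$, solve for $v$, then take the far field pattern. Compactness follows because the far field pattern map from a bounded region of radiating solutions into $L^2(\mathbb{S}^2)$ is compact (the far field is an analytic function on $\mathbb{S}^2$, and one gets a gain of smoothness / the map factors through a compact Sobolev embedding on a large sphere enclosing everything), exactly as in Theorem 3.2(c) of \cite{Kirsch and Liu2} cited in the statement. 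For the dense range, I would argue by duality: if $\psi \in L^2(\mathbb{S}^2)$ is orthogonal to $\mathrm{Ran}(G^{Mix}_{\Omega_10,B_2})$, then the Herglotz wave function with kernel $\overline{\psi}$ (or rather its conjugate) must vanish on $\Omega_1$ and on $\partial B_2$ after applying the adjoint relation; since Herglotz functions are entire solutions of the Helmholtz equation, vanishing on the open set $\Omega_1$ forces $\psi = 0$ by analyticity (unique continuation). This is the standard reciprocity-plus-unique-continuation argument.

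For part (b), the direction $z \in \Omega_1 \Rightarrow \phi_z \in \mathrm{Ran}(G^{Mix}_{\Omega_10,B_2})$ is the easy one: if $z \in \Omega_1$, one wants to produce $(f,g)$ such that the radiating solution $v$ of \eqref{4.2}--\eqref{4.3} has far field $\phi_z = e^{-ikz\cdot\hat x}$, which is the far field of the fundamental solution $\Phi(\cdot,z)$. Since $z \in \Omega_1$ and $\Omega_1 \cap \overline{B_2} = \emptyset$, the function $\Phi(\cdot,z)$ solves the homogeneous Helmholtz equation in a neighborhood of $\partial B_2$ and outside $B_2$; I would set $g := \Phi(\cdot,z)|_{\partial B_2}$ and then choose $f$ so that the inhomogeneous term $-k^2 \frac{q}{\sqrt{|q|}} f$ on $\Omega_1$ reproduces the right singularity — concretely, one checks that $v := \Phi(\cdot,z)$ itself (restricted to $\mathbb{R}^3 \setminus \overline{B_2}$) is a radiating solution of \eqref{4.2}--\eqref{4.3} for a suitable $f \in L^2(\Omega_1)$, because inside $\Omega_1$ one needs $\Delta \Phi(\cdot,z) + k^2\Phi(\cdot,z) = -k^2 \frac{q}{\sqrt{|q|}} f$, and away from the point $z$ the left side is $0$ while near $z$ it is a delta; the point is that $\Phi(\cdot,z)$ is smooth on $\partial B_2$ and outside $B_2$, hence its restriction is a legitimate radiating solution with the stated far field. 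One must use Assumption 1.3(ii) (that $|q|$ is locally bounded below, so $\frac{q}{\sqrt{|q|}}$ is under control and $f$ can be recovered in $L^2$) — this is exactly why that hypothesis appears.

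The converse direction, $\phi_z \in \mathrm{Ran}(G^{Mix}_{\Omega_10,B_2}) \Rightarrow z \in \Omega_1$ for $z \in \mathbb{R}^3 \setminus \overline{B_2}$, is the contrapositive argument and is where the main obstacle lies. Suppose $z \notin \overline{\Omega_1}$ (and $z \notin \overline{B_2}$) but $\phi_z = G^{Mix}_{\Omega_10,B_2}(f,g)^T$ for some $(f,g)$. Then the radiating solution $v$ of \eqref{4.2}--\eqref{4.3} has the same far field as $\Phi(\cdot,z)$, so by Rellich's lemma and unique continuation $v = \Phi(\cdot,z)$ in the unbounded component of the complement of $\overline{\Omega_1 \cup B_2}$. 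Now one tries to continue this identity inward: $v$ is a legitimate (possibly inhomogeneous) solution across $\partial \Omega_1$ and across $\partial B_2$, but $\Phi(\cdot,z)$ has a singularity at $z$, which by hypothesis sits outside $\overline{\Omega_1}$ and outside $\overline{B_2}$, i.e. in a region where $v$ is a genuine solution — contradiction, since $v$ cannot have a singularity there. The delicate point is handling the transmission/boundary behavior correctly: one needs that the identity $v = \Phi(\cdot,z)$ propagates through the region where the equation for $v$ is inhomogeneous (inside $\Omega_1$) and up to $\partial B_2$; this uses the connectedness of $\mathbb{R}^3 \setminus \overline{B_2}$, the continuity of $v$ and $\Phi(\cdot,z)$ across $\partial \Omega_1$, and unique continuation for the Helmholtz equation with $L^\infty$ potential $k^2(1+q)$ away from $z$. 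I would model this step on the proof of Theorem 3.2(c)/the range identity in \cite{Kirsch and Liu2}, and on the corresponding argument (Theorem 1.12--1.13 in \cite{Kirsch and Grinberg}) already invoked for Lemma 3.1(b), adapting it to the fact that the ``obstacle'' region for the auxiliary operator is $B_2$ while the ``medium'' region is $\Omega_1$ with the hypotheses of Assumption 1.3 ensuring the contrast function behaves well enough for the unique continuation argument to go through.
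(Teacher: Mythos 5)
Your overall strategy is the standard one, and it is essentially all the paper itself offers: the paper gives no proof of Lemma 4.1, delegating both parts to ``the same argument in Theorem 3.2 (c) in \cite{Kirsch and Liu2}''. Your treatment of (a) (compactness by the smoothing of the far-field map; dense range by injectivity of the adjoint, using that the $L^{2}(\Omega_1)$-component of $G^{Mix\ *}_{\Omega_10,B_2}\psi$ forces a Herglotz-type wave to vanish on the open set $\Omega_1$, hence everywhere by analyticity) and of the converse implication in (b) (Rellich's lemma plus the singularity of $\Phi(\cdot,z)$ at a point where $v$ is regular) is the intended argument.

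There is, however, one step that fails as written: the forward implication of (b). You propose to take $v:=\Phi(\cdot,z)$ itself on $\mathbb{R}^3\setminus\overline{B_2}$ and to find $f\in L^{2}(\Omega_1)$ with $-k^2\frac{q}{\sqrt{|q|}}f=\Delta\Phi(\cdot,z)+k^2\Phi(\cdot,z)$. For $z\in\Omega_1$ the point $z$ lies in $\mathbb{R}^3\setminus\overline{B_2}$, where $\Delta\Phi(\cdot,z)+k^2\Phi(\cdot,z)=-\delta_z$; a Dirac mass is not of the form $-k^2\frac{q}{\sqrt{|q|}}f$ with $f\in L^{2}(\Omega_1)$, so no admissible $f$ exists and $\Phi(\cdot,z)$ is not a solution of (\ref{4.2}) in the required sense. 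You notice the delta (``near $z$ it is a delta'') but do not resolve it. The standard repair is a cutoff: choose $\rho\in C^{\infty}(\mathbb{R}^3)$ with $\rho\equiv 0$ near $z$ and $\rho\equiv 1$ outside a ball $B(z,\varepsilon)$ with $\overline{B(z,\varepsilon)}\subset\Omega_1$, and set $v:=\rho\,\Phi(\cdot,z)$. Then $v$ coincides with $\Phi(\cdot,z)$ outside a compact subset of $\Omega_1$, so it is a radiating solution with far field $\phi_z$ and trace $g:=\Phi(\cdot,z)\bigl|_{\partial B_2}$, while $h:=\Delta v+k^2v=2\nabla\rho\cdot\nabla\Phi(\cdot,z)+(\Delta\rho)\Phi(\cdot,z)$ is smooth and supported in a compact subset $M\subset\Omega_1$. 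Only now does Assumption 1.3 (ii) do its job: $|q|\geq c>0$ on $M$, so $f:=-\frac{\sqrt{|q|}}{k^2q}h$ belongs to $L^{2}(\Omega_1)$ and $G^{Mix}_{\Omega_10,B_2}(f,g)^{T}=\phi_z$. Two further minor points: the case $z\in\partial\Omega_1$ must also be excluded from the range (by the usual blow-up of the $H^{1}$-norm of $\Phi(\cdot,z)$ near $z$), and in the converse direction the equation satisfied by $v$ near $z\notin\overline{\Omega_1}$ is the plain Helmholtz equation with vanishing source (since $q=0$ outside $\Omega_1$), not the equation with potential $k^2(1+q)$ that you invoke; this does not affect the conclusion but the operator being characterized is $G^{Mix}_{\Omega_10,B_2}$, whose governing equation (\ref{4.2}) carries $q$ only in the source term.
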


To prove Theorem 1.4, we apply Theorem 2.4 to this case with $F=F^{Mix\ *}_{\Omega_1q,\Omega_2}+F^{Dir\ *}_{B_2}+F^{Imp\ *}_{B_3,i\lambda_0}$. First, we show the following lemma:
\begin{lem}
\begin{description}
\item[(a)] 
 $R_1-\left(
    \begin{array}{cc}
      I & 0 \\
      0 & 0
    \end{array}
  \right)$, $R_3$
are  compact.
\item[(b)] 
$R_1$ is injective.
\item[(c)]
$R_3^{*}$ is injective.
\end{description}
\end{lem}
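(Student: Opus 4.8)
The plan is to verify the three items in turn, using only the disjointness relations in Assumption 1.3 and the mapping properties of the solution operators recalled in Section 2 and in \cite{Kirsch and Liu2}. \textbf{(a)} First I would note that $R_1-\bigl(\begin{smallmatrix}I&0\\0&0\end{smallmatrix}\bigr)$ sends the pair $(f_1,g_1)$ to $(\sqrt{|q|}\,v_1,\;v_1|_{\partial B_2})$, and that by well-posedness of (\ref{4.5})--(\ref{4.6}) the map $(f_1,g_1)\mapsto v_1$ is bounded from $L^{2}(\Omega_1)\times H^{1/2}(\partial\Omega_2)$ into $H^{1}_{loc}(\mathbb{R}^3\setminus\overline{\Omega_2})$. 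The first component then factors as a bounded map into $H^{1}(\Omega_1)$, the compact embedding $H^{1}(\Omega_1)\hookrightarrow L^{2}(\Omega_1)$, and multiplication by $\sqrt{|q|}\in L^{\infty}(\Omega_1)$, hence is compact. For the second component I would use that $q\equiv 0$ in a neighbourhood of $\overline{B_2}$ (because $\Omega_1=\mathrm{supp}\,q$, $\overline{\Omega_1}\cap\overline{B_2}=\emptyset$, while $\overline{\Omega_2}\subset B_2$ keeps $\partial B_2$ away from $\overline{\Omega_2}$), so that $v_1$ solves the homogeneous Helmholtz equation near $\partial B_2$; interior elliptic regularity bounds $\|v_1|_{\partial B_2}\|_{H^{3/2}(\partial B_2)}$ by $\|f_1\|_{L^{2}(\Omega_1)}+\|g_1\|_{H^{1/2}(\partial\Omega_2)}$, and the compact embedding $H^{3/2}(\partial B_2)\hookrightarrow H^{1/2}(\partial B_2)$ completes the argument. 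The same interior-regularity argument, now using $\overline{B_3}\subset B_2$ so that $v_3$ is Helmholtz near $\partial B_2$, shows $R_3$ itself is compact.

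\textbf{(b)} For the injectivity of $R_1$ I would assume $R_1(f_1,g_1)=0$, i.e.\ $f_1=-\sqrt{|q|}\,v_1$ in $\Omega_1$ and $v_1|_{\partial B_2}=0$, and substitute $f_1=-\sqrt{|q|}\,v_1$ into (\ref{4.5}): the source term becomes $k^{2}qv_1$, so the equation collapses to $\Delta v_1+k^{2}v_1=0$ in $\mathbb{R}^3\setminus\overline{\Omega_2}$, making $v_1$ a radiating solution of the free Helmholtz equation there. On $\mathbb{R}^3\setminus\overline{B_2}$ (contained in $\mathbb{R}^3\setminus\overline{\Omega_2}$ since $\overline{\Omega_2}\subset B_2$) it is radiating with vanishing Dirichlet trace on $\partial B_2$, hence $v_1\equiv 0$ there by uniqueness of the exterior Dirichlet problem; since $\mathbb{R}^3\setminus\overline{\Omega_2}$ is connected and Helmholtz solutions are real-analytic, unique continuation gives $v_1\equiv 0$ in all of $\mathbb{R}^3\setminus\overline{\Omega_2}$. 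Then $f_1=-\sqrt{|q|}\,v_1=0$, and by (\ref{4.6}) also $g_1=-v_1|_{\partial\Omega_2}=0$, so $R_1$ is injective.

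\textbf{(c)} For the injectivity of $R_3^{*}$ I would repeat the argument used for $R_3^{*}$ in Lemma 3.2(b), with $\partial B_1$ deleted. Given $\psi\in H^{-1/2}(\partial B_2)$ with $R_3^{*}\psi=0$, the construction used in Theorem 2.5 of \cite{Liu and Zhang and Hu} provides a radiating solution $w$ of $\Delta w+k^{2}w=0$ in $\mathbb{R}^3\setminus\overline{B_2}$ and in $B_2\setminus\overline{B_3}$ satisfying $w_+-w_-=0$, $\frac{\partial w_+}{\partial\nu_{B_2}}-\frac{\partial w_-}{\partial\nu_{B_2}}=\overline{\psi}$ on $\partial B_2$, and $\frac{\partial w}{\partial\nu_{B_3}}+i\lambda_0 w=0$ on $\partial B_3$. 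Rewriting $\langle f_3,R_3^{*}\psi\rangle=\int_{\partial B_2}v_3\overline{\psi}\,ds$ by means of the jump relations, applying Green's theorem on $B_2\setminus\overline{B_3}$ and on $\mathbb{R}^3\setminus\overline{B_2}$ (the boundary term at infinity vanishing for two radiating solutions), and using the impedance condition (\ref{4.12}) on $v_3$ together with the one on $w$ — precisely the chain of identities in (\ref{3.21}) — would yield $\int_{\partial B_3}f_3\,w\,ds=0$ for every $f_3\in H^{-1/2}(\partial B_3)$, hence $w=0$ on $\partial B_3$ and then $\frac{\partial w}{\partial\nu_{B_3}}=0$ on $\partial B_3$ by the impedance relation. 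Holmgren's uniqueness theorem yields $w\equiv 0$ in $B_2\setminus\overline{B_3}$, so $w_-=w_+=0$ on $\partial B_2$; uniqueness of the exterior Dirichlet problem gives $w\equiv 0$ in $\mathbb{R}^3\setminus\overline{B_2}$, and the jump relation on $\partial B_2$ then forces $\psi=0$.

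The only genuinely delicate step is (c): one must justify the existence of the transmission/impedance field $w$ and keep careful track of the orientation of the normals when chaining Green's identities, exactly as in the computation (\ref{3.21}). Parts (a) and (b) are routine, resting respectively on Rellich's theorem together with interior elliptic regularity, and on the algebraic collapse of the source term after the substitution $f_1=-\sqrt{|q|}\,v_1$.
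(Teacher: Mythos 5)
Your proof is correct and follows essentially the same route as the paper: part (a) by boundedness into more regular spaces plus Rellich, part (b) by the collapse of the source term under the substitution $f_1=-\sqrt{|q|}\,v_1$ followed by exterior Dirichlet uniqueness and analyticity, and part (c) by the duality/transmission argument that the paper simply delegates to Lemma 3.2 of Kirsch--Liu and which you reproduce along the lines of the computation (\ref{3.21}). The only cosmetic difference is that in (a) you route the first component through the compact embedding $H^{1}(\Omega_1)\hookrightarrow L^{2}(\Omega_1)$ before multiplying by $\sqrt{|q|}$, which is if anything a slightly cleaner way to handle the merely bounded factor $\sqrt{|q|}$.
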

\begin{proof}[Proof]
(a) The mappings 
$R_1-\left(
    \begin{array}{cc}
      I & 0 \\
      0 & 0
    \end{array}
  \right)$ : $L^{2}(\Omega_1) \times H^{1/2}(\partial \Omega_2) \to H^{1}(\Omega_1) \times H^{1}(\partial B_2)$, and 
$R_3$ : $H^{-1/2}(\partial B_3) \to H^{1}(\partial B_2)$ are bounded since they are given by
$\left(
     \begin{array}{cc}
      f_1  \\
      g_1 
    \end{array}
    \right)
    \mapsto
    \left(
     \begin{array}{cc}
      \sqrt{|q|}v_1  \\
      v_1\bigl|_{\partial B_2}
    \end{array}
    \right)$, and $f_3 \mapsto v_3\bigl|_{\partial B_2}$, respectively. By Rellich theorem, they are compact.\par

(b) Assume that
\begin{equation}
R_1
\left(
    \begin{array}{cc}
      f_1 \\
      g_1 
    \end{array}
  \right)
  =\left(
    \begin{array}{cc}
      f_1+\sqrt{|q|}v_1 \\
      v_1\bigl|_{\partial B_2}
    \end{array}
  \right)=0.
\label{4.16}
\end{equation}
Equation (\ref{4.5}) yields that
\begin{equation}
\Delta v_1+k^2v_1=0 \ \mathrm{in} \ \mathbb{R}^3\setminus  \overline{B_2}, \label{4.17}
\end{equation}
\begin{equation}
v_1=0 \ \mathrm{on} \ \partial{B_2} \label{4.18}.
\end{equation}
By the uniqueness of the exterior Dirichlet problem, $v_1$ vanishes outside of $B_2$. Therefore, $f_1=0$. Furthermore, the analyticity of $v_1$ yields that $v_1$ also vanishes in $B_2 \setminus  \overline{\Omega_2}$, which implies that $g_1=0$.

(c) The injectivity of $R^{*}_3$ follows from the same argument as done in the proof of Lemma 3.2 in \cite{Kirsch and Liu}.
\end{proof}

By Lemma 4.2, the middle operator $M$ of ($\ref{4.14}$) has the following properties:
\begin{lem}
\begin{description}
\item[(a)]
$\mathrm{Re}\bigl(\mathrm{e}^{it}M^{*} \bigr)$ has the form $\mathrm{Re}\bigl(\mathrm{e}^{it}M^{*} \bigr)=C+K$ with some self-adjoint and positive coercive operator $C$, and some compact operator $K$.
\item[(b)]$
\mathrm{Im} \langle \varphi, M^{*} \varphi\rangle \geq 0 \ for \ all \ \varphi \in L^{2}(\Omega_1) \times H^{-1/2}(\partial B_2)$.
\item[(c)] $M^{*}$ is injective.
\end{description}
\end{lem}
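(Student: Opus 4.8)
The plan is to verify the three assertions of Lemma 4.3 by combining the factorization $M=\bigl[R_1M^{Mix\,*}_{\Omega_1q,\Omega_2}R^{*}_1-R_2S^{*}_{B_2}R^{*}_2-R_2R_3T^{Imp\,*}_{B_3,i\lambda_0}R^{*}_3R^{*}_2\bigr]$ with the structural decompositions of the three middle operators from Lemmas 2.1, 2.3, and the compactness facts from Lemma 4.2. Taking adjoints, $M^{*}=R_1M^{Mix}_{\Omega_1q,\Omega_2}R^{*}_1-R_2S_{B_2}R^{*}_2-R_2R_3T^{Imp}_{B_3,i\lambda_0}R^{*}_3R^{*}_2$, so the three parts are of exactly the types handled in Section 2.

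For part (a), I would compute $\mathrm{Re}(\mathrm{e}^{it}M^{*})$ term by term. The crucial term is $R_1\mathrm{Re}(\mathrm{e}^{it}M^{Mix}_{\Omega_1q,\Omega_2})R^{*}_1$: by Lemma 2.3(b), $M^{Mix}_{\Omega_1q,\Omega_2}=\mathrm{diag}(|q|/(k^2q),-S_{\Omega_2,i})+K$, and since $\overline{\Omega_2}\subset B_2$ while $z$ ranges in $\mathbb{R}^3\setminus\overline{B_2}$, the relevant coercive piece lives on the $L^2(\Omega_1)$-component. Assumption 1.3(iv) gives $\mathrm{Re}(\mathrm{e}^{-it}q)\ge C|q|$, hence $\mathrm{Re}(\mathrm{e}^{it}\,\overline{|q|/(k^2q)})=\mathrm{Re}(\mathrm{e}^{it}|q|\overline{q}/(k^2|q|^2))=\mathrm{Re}(\mathrm{e}^{it}\overline q)/(k^2|q|)=\mathrm{Re}(\mathrm{e}^{-it}q)/(k^2|q|)\ge C/k^2$, so this component contributes a positive coercive multiplication operator on $L^2(\Omega_1)$. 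Using Lemma 4.2(a) — that $R_1-\mathrm{diag}(I,0)$ is compact — the $R_1(\cdot)R_1^{*}$ sandwich reduces, modulo compact operators, to $\mathrm{diag}(\mathrm{Re}(\mathrm{e}^{it}\overline{|q|/(k^2q)}),0)$, which is self-adjoint and bounded below by $C/k^2$ on the first component. The $-R_2S_{B_2}R^{*}_2$ and $-R_2R_3T^{Imp}_{B_3,i\lambda_0}R^{*}_3R^{*}_2$ terms: by Lemma 2.1(b), $S_{B_2}=S_{B_2,i}+\text{compact}$ and $T^{Imp}_{B_3,i\lambda_0}=N_{B_3,i}+\text{compact}$; one must check the $t$-rotation keeps their real parts controllable — but $R_3$ is compact (Lemma 4.2(a)), so the last term is compact outright, while $R_2$ is fixed (not small), so the $R_2S_{B_2,i}R_2^{*}$ piece must be absorbed. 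Here the key point is that $R_2=P_2$ maps into the $H^{1/2}(\partial B_2)$-component which is orthogonal to $L^2(\Omega_1)$, so $\mathrm{Re}(\mathrm{e}^{it}M^{*})$ decomposes as a block-diagonal operator whose first block is coercive (from $q$) and whose second block, up to compacts, is $-\mathrm{Re}(\mathrm{e}^{it}S_{B_2,i})$ on $H^{1/2}(\partial B_2)$. For $t\in(\pi/2,3\pi/2)$ we have $\mathrm{Re}(\mathrm{e}^{it})=\cos t<0$, and $S_{B_2,i}$ is self-adjoint positive coercive, so $-\cos t\cdot S_{B_2,i}$ is positive coercive; thus the full operator $C$ is positive coercive on $L^2(\Omega_1)\times H^{1/2}(\partial B_2)$.

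For part (b), writing $\langle\varphi,M^{*}\varphi\rangle=\langle R_1^{*}\varphi,M^{Mix}_{\Omega_1q,\Omega_2}R_1^{*}\varphi\rangle-\langle R_2^{*}\varphi,S_{B_2}R_2^{*}\varphi\rangle-\langle R_3^{*}R_2^{*}\varphi,T^{Imp}_{B_3,i\lambda_0}R_3^{*}R_2^{*}\varphi\rangle$, take imaginary parts. By Lemma 2.3(c), $\mathrm{Im}\langle\psi,M^{Mix}_{\Omega_1q,\Omega_2}\psi\rangle\ge0$; by Lemma 2.1(c), $\mathrm{Im}\langle\psi,S_{B_2}\psi\rangle\le0$ so $-\mathrm{Im}\langle\cdot,S_{B_2}\cdot\rangle\ge0$; by Lemma 2.1(d), $\mathrm{Im}\langle T^{Imp}_{B_3,i\lambda_0}\psi,\psi\rangle>0$ so $-\mathrm{Im}\langle T^{Imp}\cdot,\cdot\rangle<0$ — this last sign is the wrong way. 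So (b) as stated would fail unless this negative term is offset; I suspect the intended reading is that $\mathrm{Im}\langle\varphi,M^{*}\varphi\rangle\ge0$ follows because $M^{*}$ is really $(\text{something})^{*}$ with the appropriate sign convention, or because $\langle T^{Imp}\psi,\psi\rangle$ appears with its real part shifted by $\mathrm{e}^{it}$; one must re-examine whether the decomposition should read $M^{*}=\cdots+R_2R_3T^{Imp}R_3^{*}R_2^{*}$ (note the sign in the definition of $M$ has a minus, and $T^{Imp\,*}$ not $T^{Imp}$). I would carefully track all adjoints and signs: $F^{Imp}_{B_3,i\lambda_0}=-G^{Imp}T^{Imp\,*}G^{Imp\,*}$, so in $M$ the term is $-R_2R_3T^{Imp\,*}R_3^{*}R_2^{*}$, and in $M^{*}$ it becomes $-R_2R_3T^{Imp}R_3^{*}R_2^{*}$, giving $-\mathrm{Im}\langle T^{Imp}\psi,\psi\rangle<0$; the resolution must be that, in fact, we want $\mathrm{Im}\langle\varphi,M^{*}\varphi\rangle\le0$ or that the whole $M$ should be replaced by $-M$ somewhere — this sign bookkeeping is the main obstacle, and I would resolve it by comparing directly with the parallel computation $(\ref{3.23})$ in Section 3 where the impedance term contributes with a definite sign to make the middle operator's imaginary part strictly definite.

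For part (c), injectivity of $M^{*}$: suppose $M^{*}\varphi=0$ with $\varphi=(\varphi_1,\varphi_2)$. Using the sign-definiteness from (b) (once its direction is settled), $\mathrm{Im}\langle\varphi,M^{*}\varphi\rangle=0$ forces each nonnegative summand to vanish; in particular $\mathrm{Im}\langle T^{Imp}_{B_3,i\lambda_0}R_3^{*}R_2^{*}\varphi,R_3^{*}R_2^{*}\varphi\rangle=0$, which by Lemma 2.1(d)'s strict positivity gives $R_3^{*}R_2^{*}\varphi=0$; since $R_3^{*}$ is injective (Lemma 4.2(c)) and $R_2^{*}$ is the projection onto the $H^{1/2}(\partial B_2)$-component, this yields $\varphi_2=0$. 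Then $M^{*}\varphi=R_1M^{Mix}_{\Omega_1q,\Omega_2}R_1^{*}\varphi=0$; pairing with $\varphi$ and using Lemma 2.3(c)–(d) together with the injectivity of $R_1$ (Lemma 4.2(b)) — noting $R_1^{*}\varphi$ has vanishing $L^2(\Omega_1)$-part forced by Lemma 2.3(d), and $R_1$ being injective on the remaining component — one concludes $\varphi_1=0$. The delicate point throughout is matching the block structure of $R_1,R_2,R_3$ against the block structure of $M^{Mix}_{\Omega_1q,\Omega_2}$, exactly as in Section 3, but with the $L^2(\Omega_1)$-component replacing the Dirichlet trace; I would mirror the proof of Lemma 3.3 step for step, substituting Lemma 2.3 for Lemma 2.2 and handling the $\mathrm{e}^{it}$-rotation as dictated by Assumption 1.3(iv).
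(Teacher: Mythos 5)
Your part (a) follows the paper's route and is essentially correct: you reduce the $R_1(\cdot)R_1^{*}$ sandwich to $\mathrm{diag}(I,0)(\cdot)\mathrm{diag}(I,0)$ modulo compacts using Lemma 4.2(a), observe that the impedance term is compact outright because $R_3$ is, identify $R_2S_{B_2,i}R_2^{*}=\mathrm{diag}(0,S_{B_2,i})$, and use $-\cos t>0$ together with Assumption 1.3(iv) to get coercivity of $\mathrm{diag}\bigl(\mathrm{Re}(\mathrm{e}^{it}|q|/(k^2q)),\,(-\cos t)S_{B_2,i}\bigr)$, exactly as in (4.19)--(4.20). But in (b) you leave the decisive sign unresolved, and your speculation that $M$ might need replacing by $-M$ or the inequality reversing is wrong. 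The point you are missing is elementary: the duality pairing is sesquilinear, so $\langle \psi, T\psi\rangle=\overline{\langle T\psi,\psi\rangle}$ and hence $\mathrm{Im}\langle \psi, T\psi\rangle=-\mathrm{Im}\langle T\psi,\psi\rangle$. Expanding the third term of $M^{*}$ places $T^{Imp}_{B_3,i\lambda_0}$ in the \emph{second} slot with a minus sign in front: $\langle\varphi,-R_2R_3T^{Imp}_{B_3,i\lambda_0}R_3^{*}R_2^{*}\varphi\rangle=-\langle\psi,T^{Imp}_{B_3,i\lambda_0}\psi\rangle$ with $\psi=R_3^{*}R_2^{*}\varphi$, whose imaginary part equals $+\mathrm{Im}\langle T^{Imp}_{B_3,i\lambda_0}\psi,\psi\rangle>0$ by Lemma 2.1(d). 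The two sign flips cancel, which is precisely the paper's (4.21); no modification of $M$ is needed and (b) holds as stated.

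There is a second gap in (c). After correctly obtaining $\varphi_2=0$ and reducing to $R_1M^{Mix}_{\Omega_1q,\Omega_2}R_1^{*}(\varphi_1,0)^{T}=0$, your conclusion that "$\varphi_1=0$ by injectivity of $R_1$ on the remaining component" is not a valid inference. Injectivity of $R_1$ (Lemma 4.2(b)) combined with Lemma 2.3(d) only yields that the $L^{2}(\Omega_1)$-component of $R_1^{*}(\varphi_1,0)^{T}$ vanishes; this says nothing immediate about $\varphi_1$, since $R_1^{*}$ is not the identity on that component. The paper closes this by computing $R_1^{*}$ explicitly: introduce the adjoint solution $w$ of $\Delta w+k^2(1+q)w=\sqrt{|q|}\,\overline{\varphi_1}$ with $w=0$ on $\partial\Omega_2$, pair $R_1^{*}(\varphi_1,0)^{T}$ against an arbitrary $(f_1,0)^{T}$, apply Green's theorem to derive $\overline{\varphi_1}=k^2 q w/\sqrt{|q|}$ in $\Omega_1$, substitute back to see that $w$ then solves the homogeneous exterior Dirichlet problem, and conclude $w=0$, hence $\varphi_1=0$ (equations (4.25)--(4.32)). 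Without this computation your proof of (c) is incomplete.
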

\begin{proof}[Proof]

(a) By Lemma 2.1 (b), Lemma 2.3 (b), and Lemma 4.2 (a),
\begin{eqnarray}
\mathrm{Re}\bigl(\mathrm{e}^{it}M^{*} \bigr)
&=&
\mathrm{Re}\Bigl(\mathrm{e}^{it} R_1M^{Mix}_{\Omega_1q,\Omega_2}R^{*}_1-\mathrm{e}^{it}R_2S_{B_2}R^{*}_2-\mathrm{e}^{it}R_2R_3T^{Imp}_{B_3,i\lambda_0}R^{*}_3R^{*}_2 \Bigr)
\nonumber\\
&=&
R_1 
\left(
    \begin{array}{cc}
      \mathrm{Re}(\frac{\mathrm{e}^{it}|q|}{k^2q}) & 0 \\
      0 & -$(cos t)$S_{\Omega_2,i}
    \end{array}
  \right)
R^{*}_1-R_2(\mathrm{cos\ t})S_{B_2,i} R^{*}_2+K
\nonumber\\
&=&
\left(
    \begin{array}{cc}
      I & 0 \\
      0 & 0
    \end{array}
\right)
\left(
    \begin{array}{cc}
      \mathrm{Re}(\frac{\mathrm{e}^{it}|q|}{k^2q}) & 0 \\
      0 & -$(cos t)$S_{\Omega_2,i}
    \end{array}
\right)
\left(
    \begin{array}{cc}
      I & 0 \\
      0 & 0
    \end{array}
\right)
\nonumber\\
&&
\nonumber\\
&&
-R_2(\mathrm{cos\ t})S_{B_2,i} R^{*}_2
+K'
\nonumber\\
&&
\nonumber\\
&=&
\left(
    \begin{array}{cc}
      \mathrm{Re}(\frac{\mathrm{e}^{it}|q|}{k^2q}) & 0 \\
      0 & (-\mathrm{cos\ t})S_{B_2,i} 
    \end{array}
\right)+K',\label{4.19}
\end{eqnarray}
where $K$ and $K'$ are some compact operators. The first term of the right hand side in (\ref{4.19}) is self-adjoint and positive coercive since $(-\mathrm{cos\ t}) >0$ when $t \in (\pi/2,3\pi/2)$, and Assumption 1.3 (iv) yields

\begin{eqnarray}
\Bigl \langle \varphi, \mathrm{Re}\bigl(\frac{\mathrm{e}^{it}|q|}{k^2q}\bigr) \varphi \Bigr \rangle
&=&
\int_{\Omega_1}|\varphi|^2 \frac{\mathrm{Re}(\mathrm{e}^{-it}q)}{k^2|q|}dx
\nonumber\\
&\geq&
\int_{\Omega_1}|\varphi|^2 \frac{C|q|}{k^2|q|}dx
\nonumber\\
&=&
\frac{C}{k^2}
\left\| \varphi \right\|^2_{L^2(\Omega_1)}.\label{4.20}
\end{eqnarray}

(b) By Lemma 2.1 (c), Lemma 2.3 (c) (d), for all $\varphi \in L^{2}(\Omega_1) \times H^{-1/2}(\partial B_2)$
\begin{eqnarray}
\mathrm{Im} \langle \varphi, M^{*} \varphi\rangle
&=&\mathrm{Im} \langle R^{*}_1 \varphi, M^{Mix}_{\Omega_1q, \Omega_2} R^{*}_1 \varphi \rangle
-\mathrm{Im} \langle R^{*}_2 \varphi, S_{B_2}R^{*}_2 \varphi \rangle
\nonumber\\
&&
+\mathrm{Im} \langle T^{Imp}_{B_3,i\lambda_0} R^{*}_3R^{*}_2 \varphi, R^{*}_3R^{*}_2 \varphi \rangle
\geq 0. \label{4.21}
\end{eqnarray}

(c) Let $\phi \in L^{2}(\Omega_1)$ and $\psi \in H^{-1/2}(\partial B_2)$. Assume that $M^{*} \left(
    \begin{array}{cc}
      \phi \\
      \psi
    \end{array}
  \right)=0$.
Inequality (\ref{4.21}) yields that
\begin{equation}
\mathrm{Im} \Bigl \langle T^{Imp}_{B_3,i\lambda_0} R^{*}_3R^{*}_2 \left(
    \begin{array}{cc}
      \phi \\
      \psi
    \end{array}
  \right), R^{*}_3R^{*}_2 \left(
    \begin{array}{cc}
      \phi \\
      \psi
    \end{array}
  \right)\Bigr \rangle=0,\label{4.22}
\end{equation}
which implies that $R^{*}_{3}R^{*}_{2}
\left(\begin{array}{cc}
      \phi \\
      \psi
\end{array}\right)=0$ from Lemma 2.1 (d). By Lemma 4.2 (c), and the definition of $R_2$, we have $\psi=0$. Therefore,

\begin{equation}
M^{*} \left(
    \begin{array}{cc}
      \phi \\
      0
    \end{array}
  \right)=R_1M^{Mix}_{\Omega_1q,\Omega_2}R^{*}_1
  \left(
    \begin{array}{cc}
      \phi \\
      0
    \end{array}
  \right)=0.\label{4.23}
\end{equation}
From Lemma 4.2 (b) and Lemma 2.3 (d), we obtain 
\begin{equation}
R^{*}_1
\left(
    \begin{array}{cc}
      \phi \\
      0
    \end{array}
  \right)=\left(\begin{array}{cc}
      0 \\
      *
    \end{array}
  \right).\label{4.24}
\end{equation}
Finally, we will show $\phi=0$. Let $f_1 \in L^{2}(\Omega_1)$. Take radiating solutions $v_1$ and $w$ such that 
\begin{equation}
\Delta v_1+k^2(1+q)v_1=-k^2 \frac{q}{\sqrt{|q|}}f_1 \ \mathrm{in} \ \mathbb{R}^3\setminus  \overline{\Omega_2}, \label{4.25}
\end{equation}
\begin{equation}
v_1=0 \ \mathrm{on} \ \partial{\Omega_2}, \label{4.26}
\end{equation}
\begin{equation}
\Delta w+k^2(1+q)w=\sqrt{|q|} \overline{\phi} \ \mathrm{in} \ \mathbb{R}^3\setminus  \overline{\Omega_2}, \label{4.27}
\end{equation}
\begin{equation}
w=0 \ \mathrm{on} \ \partial{\Omega_2}. \label{4.28}
\end{equation}
By (\ref{4.24}),
\begin{eqnarray}
0&=&
\Bigl \langle\left(
    \begin{array}{cc}
      f_1 \\
      0
    \end{array}
  \right) , R_1^{*}\left(
    \begin{array}{cc}
      \phi \\
      0
    \end{array}
  \right)\Bigr \rangle
= \Bigl \langle \left(
    \begin{array}{cc}
      f_1+\sqrt{|q|}v_1 \\
      v_1\bigl|_{\partial B_2}
    \end{array}
  \right) , \left(
    \begin{array}{cc}
      \phi \\
      0
    \end{array}
  \right)\Bigr \rangle
\nonumber\\
&=&
\int_{\Omega_1}f_1 \overline{\phi} dx+\int_{\Omega_1} v_1 \sqrt{|q|} \overline{\phi} dx.\label{4.29}
\end{eqnarray}
By (\ref{4.25}) and (\ref{4.27}),
\begin{eqnarray}
\int_{\Omega_1} v_1 \sqrt{|q|} \overline{\phi} dx
&=&
\int_{\Omega_1} v_1 \bigl(\Delta w+k^2(1+q)w \bigr) dx
\nonumber\\
&&
-\int_{\Omega_1} \Bigl( \Delta v_1+k^2(1+q)v_1+k^2 \frac{q}{\sqrt{|q|}}f_1\Bigr) w dx
\nonumber\\
&=&
-\int_{\Omega_1} k^2 \frac{q}{\sqrt{|q|}}f_1w dx
\nonumber\\
&&
+\int_{\Omega_1} (\Delta w)v_1-w (\Delta v_1)dx.\label{4.30}
\end{eqnarray}
By using Green's theorem, (\ref{4.26}), and (\ref{4.28}),
\begin{eqnarray}
\int_{\Omega_1} (\Delta w)v_1-w (\Delta v_1)dx
&=&
\int_{\mathbb{R}^3\setminus  \overline{\Omega_2}} (\Delta w)v_1-w (\Delta v_1)dx
\nonumber\\
&=&
-\int_{\partial \Omega_2} \biggl[  \frac{\partial w}{\partial \nu_{\Omega_2}} v_1 - w \frac{\partial v}{\partial \nu_{\Omega_2}} \biggr] ds
\nonumber\\
&=&
0.\label{4.31}
\end{eqnarray}
By (\ref{4.29})--(\ref{4.31}),
\begin{equation}
\overline{\phi}=k^2\frac{q}{\sqrt{|q|}}w \ \mathrm{in} \  \Omega_1.\label{4.32}
\end{equation}
From (\ref{4.32}), (\ref{4.27}), and (\ref{4.28}), we obtain
\begin{equation}
\Delta w+k^2w=0 \ \mathrm{in} \ \mathbb{R}^3\setminus  \overline{\Omega_2}, \label{4.33}
\end{equation}
\begin{equation}
w=0 \ \mathrm{on} \ \partial{\Omega_2}, \label{4.34}
\end{equation}
which proves that $w$ vanishes in $\mathbb{R}^3\setminus  \overline{\Omega_2}$ by the uniqueness of the exterior Dirichlet problem. Therefore, equation (\ref{4.32}) yields that 
$\phi=0$.
\end{proof}
Therefore, by Lemma 4.3, we can apply Theorem 2.4 to this case with $F=F^{Mix\ *}_{\Omega_1q,\Omega_2}+F^{Dir\ *}_{B_2}+F^{Imp\ *}_{B_3,i\lambda_0}$. From Lemma 4.1 (b), and applying Theorem 2.4, we obtain Theorem 1.4.

\begin{rem}
We can also consider various situations on $\Omega_2$ like Remark 3.4, and replace the assumption of taking $B_3$ with that $k^2$ is not a Dirichlet eigenvalue of $-\Delta$ in an artificial domain $B_2$ like Remark 3.5.
\end{rem}

We can also give the characterization by replacing (iv) in Assumption 1.3 with
\begin{description}
\item[(iv')] {\it There exists $t \in [0, \pi/2) \cup (3\pi/2, 2 \pi]$ and $C>0$ such that
$\mathrm{Re}(\mathrm{e}^{-it}q)$ $\geq$ $C|q|$ a.e. in  $\Omega_1$.}
\end{description}
by the same argument in Theorem 1.4:
\begin{ass}
Let a bounded domain $B_2$ be a priori known. Assume the following assumptions:
\begin{description}
  \item[(i)] $q \in L^{\infty}(\Omega_1)$ with $\mathrm{Im}q  \geq 0 \ in \ \Omega_1$.
  
  \item[(ii)] $|q|$ is locally bounded below in $\Omega_1$, i.e., for every compact subset $M \subset \Omega_1$, there exists $c>0$ (depend on $M$) such that $|q|\geq c \ \mathrm{in}\ M$. 

  \item[(iii)] $\overline{\Omega_2}\subset B_2$, $\overline{\Omega_1} \cap \overline{B_2}=\emptyset.$
  
  \item[(iv')]There exists $t \in [0, \pi/2) \cup (3\pi/2, 2 \pi]$ and $C>0$ such that
$\mathrm{Re}(\mathrm{e}^{-it}q)$ $\geq$ $C|q|$ a.e. in  $\Omega_1$. 
\end{description}
\end{ass}

\begin{thm}
Let $\mathrm{Assumption \ 4.5}$ hold. Take a positive number $\lambda_0>0$. Then, for $z \in \mathbb{R}^3 \setminus  \overline{B_2}$ 
\begin{equation}
z \in \Omega_1
\Longleftrightarrow
\sum_{n=1}^{\infty}\frac{|(\phi_z,\varphi_n)_{L^2(\mathbb{S}^{2})}|^2}{\lambda_n} < \infty, \label{4.35}
\end{equation}
where $(\lambda_n,\varphi_n)$ is a complete eigensystem of $F_{\#}$ given by
\begin{equation}
F_{\#}:=\bigl|\mathrm{Re}\bigl(\mathrm{e}^{-it} F\bigr)\bigr|+\bigl|\mathrm{Im}F\bigr|, \label{4.36}
\end{equation}
where $F:=F^{Mix}_{\Omega_1q,\Omega_2}+F^{Imp}_{B_2,i\lambda_0}$.
Here, the function $\phi_z$ is given by $($$\ref{1.11}$$)$.
\end{thm}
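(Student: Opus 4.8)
The plan is to mirror the argument of Theorem 1.4 almost verbatim, with the only genuine changes coming from (a) replacing the artificial Dirichlet far field operator $F^{Dir}_{B_2}$ plus the interior impedance correction on $B_3$ by a single impedance far field operator $F^{Imp}_{B_2,i\lambda_0}$ on $B_2$, and (b) switching the range of the angle $t$ from $(\pi/2,3\pi/2)$ to $[0,\pi/2)\cup(3\pi/2,2\pi]$, so that now $\cos t>0$. As in Section 4, I would work with $F=F^{Mix\ *}_{\Omega_1 q,\Omega_2}+F^{Imp\ *}_{B_2,i\lambda_0}$ and apply Theorem 2.4 to a factorization $F=G^{Mix}_{\Omega_1 0,B_2}\widetilde M\,G^{Mix\ *}_{\Omega_1 0,B_2}$. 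First I would reuse the operator $R_1:L^2(\Omega_1)\times H^{1/2}(\partial\Omega_2)\to L^2(\Omega_1)\times H^{1/2}(\partial B_2)$ from (\ref{4.4}), which still satisfies $G^{Mix}_{\Omega_1 q,\Omega_2}=G^{Mix}_{\Omega_1 0,B_2}R_1$ and, by Lemma 4.2(a)(b), is injective and a compact perturbation of $\left(\begin{smallmatrix} I & 0\\ 0 & 0\end{smallmatrix}\right)$. For the impedance piece on $B_2$ I would introduce $R_2:H^{-1/2}(\partial B_2)\to L^2(\Omega_1)\times H^{1/2}(\partial B_2)$ sending $f_2\mapsto\bigl(0,\ v_2|_{\partial B_2}\bigr)$ where $v_2$ is the radiating solution of $\Delta v_2+k^2 v_2=0$ in $\mathbb R^3\setminus\overline{B_2}$ with $\partial v_2/\partial\nu_{B_2}+i\lambda_0 v_2=f_2$ on $\partial B_2$; this gives $G^{Imp}_{B_2,i\lambda_0}=G^{Mix}_{\Omega_1 0,B_2}R_2$, and by Lemma 2.1(a) one gets $\widetilde M=R_1 M^{Mix\ *}_{\Omega_1 q,\Omega_2}R_1^{*}-R_2 T^{Imp\ *}_{B_2,i\lambda_0}R_2^{*}$.

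Next I would establish the three properties of $\widetilde M^{*}$ needed for Theorem 2.4, exactly paralleling Lemma 4.3. For the coercivity statement, I would compute $\mathrm{Re}(\mathrm e^{it}\widetilde M^{*})$ using Lemma 2.1(b) ($T^{Imp}_{B_2,i\lambda_0}=N_{B_2,i}+K'$) and Lemma 2.3(b); the $\Omega_1$-block contributes $\mathrm{Re}\bigl(\tfrac{\mathrm e^{it}|q|}{k^2 q}\bigr)$, which by Assumption 4.5(iv') satisfies the same lower bound $\tfrac{C}{k^2}\|\varphi\|^2_{L^2(\Omega_1)}$ as in (\ref{4.20}), and the $B_2$-block becomes $(-\cos t)(-N_{B_2,i})=(\cos t)\cdot(-N_{B_2,i})$ up to compacts; since $N_{B_2,i}$ is self-adjoint and negative coercive and $\cos t>0$ on the new angular range, $-N_{B_2,i}$ times $\cos t$ is self-adjoint and positive coercive. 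Hence $\mathrm{Re}(\mathrm e^{it}\widetilde M^{*})=C+K$ with $C$ self-adjoint positive coercive. For the sign condition, $\mathrm{Im}\langle\varphi,\widetilde M^{*}\varphi\rangle=\mathrm{Im}\langle R_1^{*}\varphi,M^{Mix}_{\Omega_1 q,\Omega_2}R_1^{*}\varphi\rangle+\mathrm{Im}\langle T^{Imp}_{B_2,i\lambda_0}R_2^{*}\varphi,R_2^{*}\varphi\rangle\ge 0$ by Lemma 2.3(c) and Lemma 2.1(d). For injectivity of $\widetilde M^{*}$: if $\widetilde M^{*}\left(\begin{smallmatrix}\phi\\ \psi\end{smallmatrix}\right)=0$ then the vanishing of the nonnegative imaginary part forces $\mathrm{Im}\langle T^{Imp}_{B_2,i\lambda_0}R_2^{*}\varphi,R_2^{*}\varphi\rangle=0$, so by Lemma 2.1(d) $R_2^{*}\varphi=0$; since $R_2 f_2=(0,v_2|_{\partial B_2})$, an argument like Lemma 4.2(c) (the exterior impedance problem for $B_2$ is uniquely solvable, and $v_2|_{\partial B_2}$ determines $\psi$ through the single-layer/impedance map) gives $\psi=0$. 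Then $R_1 M^{Mix}_{\Omega_1 q,\Omega_2}R_1^{*}\left(\begin{smallmatrix}\phi\\ 0\end{smallmatrix}\right)=0$, and the remaining part — showing $\phi=0$ — is identical to (\ref{4.23})--(\ref{4.34}) in the proof of Lemma 4.3, using Lemma 4.2(b), Lemma 2.3(d), the Green's identity manipulation (\ref{4.29})--(\ref{4.32}), and the uniqueness of the exterior Dirichlet problem for $\Omega_2$.

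Finally, having verified assumptions (1)--(3) and (4) of Theorem 2.4 (compactness and dense range of $G^{Mix}_{\Omega_1 0,B_2}$ come from Lemma 4.1(a)), I conclude $\mathrm{Ran}(G^{Mix}_{\Omega_1 0,B_2})=\mathrm{Ran}(F_\#^{1/2})$ with $F_\#=|\mathrm{Re}(\mathrm e^{it}F)|+|\mathrm{Im}F|$; note $\mathrm{Re}(\mathrm e^{it}F)=\mathrm{Re}(\mathrm e^{-it}F^{*})$ matches the statement's $|\mathrm{Re}(\mathrm e^{-it}F)|$ after the substitution $F\mapsto F^{*}$ used throughout Section 4, and $|\mathrm{Im}F|=|\mathrm{Im}F^{*}|$. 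Combining with Lemma 4.1(b) ($z\in\Omega_1\iff\phi_z\in\mathrm{Ran}(G^{Mix}_{\Omega_1 0,B_2})$) and the standard Picard-type criterion characterizing $\mathrm{Ran}(F_\#^{1/2})$ via $\sum_n|(\phi_z,\varphi_n)|^2/\lambda_n<\infty$ yields (\ref{4.35}). The only step requiring real care is the injectivity argument for $\widetilde M^{*}$, specifically the deduction $R_2^{*}\varphi=0\Rightarrow\psi=0$ for the impedance operator on $B_2$ (as opposed to the Dirichlet case treated via Lemma 4.2(c) in Section 4); this rests on unique solvability of the exterior Robin problem for the Helmholtz equation, which holds for any $\lambda_0>0$, so no eigenvalue obstruction appears. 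Everything else is a routine transcription of Section 4 with $\cos t>0$ replacing $-\cos t>0$.
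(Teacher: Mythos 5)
Your proposal follows exactly the route the paper intends (the paper itself only says ``by the same argument in Theorem 1.4''): factor $F^{Imp}_{B_2,i\lambda_0}$ through $G^{Mix}_{\Omega_1 0,B_2}$ via the exterior impedance problem on $B_2$, use $\cos t>0$ together with the negative coercivity of $N_{B_2,i}$ in place of $\cos t<0$ with $S_{B_2,i}$, and let the strict positivity of $\mathrm{Im}\,T^{Imp}_{B_2,i\lambda_0}$ replace the role of the auxiliary domain $B_3$. The one step you state too loosely is the coercivity of the $B_2$-block: your $R_2$ is $P_2\Lambda$, where $\Lambda:H^{-1/2}(\partial B_2)\to H^{1/2}(\partial B_2)$ is the exterior impedance-to-Dirichlet map, so modulo compact operators the block is $(\cos t)\bigl(-\Lambda N_{B_2,i}\Lambda^{*}\bigr)$ and not $(\cos t)(-N_{B_2,i})$ — indeed $N_{B_2,i}$ maps $H^{1/2}$ to $H^{-1/2}$ and cannot itself be the coercive operator on the dual space $L^{2}(\Omega_1)\times H^{-1/2}(\partial B_2)$. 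This is repaired by observing that $\Lambda$ is an isomorphism (unique solvability of both the exterior Dirichlet and exterior impedance problems, valid for every $k>0$ and $\lambda_0>0$), so that $\langle \Lambda N_{B_2,i}\Lambda^{*}\psi,\psi\rangle\leq -c\|\Lambda^{*}\psi\|^{2}_{H^{1/2}}\leq -c'\|\psi\|^{2}_{H^{-1/2}}$; the same isomorphism property (surjectivity of $\Lambda$ suffices) is what makes $\Lambda^{*}$, hence $R_2^{*}$ restricted to the second component, injective in your argument for the injectivity of $\widetilde M^{*}$. With that correction the proof is complete and coincides with the paper's.
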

\section*{Conclusion}
In this paper, we give the characterization of the unknown domain $\Omega_1$ in a scatterer consisting of two objects with different physical properties without the assumption of the wave number $k>0$.
To realize it, we modify the original far field operator $F$ by adding artificial far field operators corresponding to an inner domain $B_1$, an outer domain $B_2$, and an additional domain $B_3$. This idea is mainly based on \cite{Kirsch and Liu}, which treats only a scattering by an obstacle with the pure Dirichlet or Neumann boundary condition. In Section 4 of \cite{Kirsch and Liu}, numerical examples are given to compare modification method (which use the artificial far field operator corresponding to an inner domain) with previous method numerically, where we find that the modification method provides numerically a better reconstruction than previous one. Therefore, we expect that even in a scatterer consisting of two objects with different physical properties, our modification method (which use several artificial far field operators) would also provide a better reconstruction than previous ones such as \cite{Kirsch and Liu2, Liu}.

\section*{Acknowledgments}
First, the author would like to express his deep gratitude to Professor Mitsuru Sugimoto, who always supports him in his study. Secondly, he thanks to Professor Masaru Ikehata and Professor Sei Nagayasu, who read this paper carefully and gave him many helpful comments. The author also thanks to Professor Andreas Kirsch, and Professor Xiaodong Liu, and the referees who gave him valuable comments which helped to improve this paper.


Graduate School of Mathematics, Nagoya University, Furocho, Chikusa-ku, Nagoya, 464-8602, Japan \par
e-mail: takashi.furuya0101@gmail.com

\end{document}